\newtheorem{thm}{Theorem}[section]
\newtheorem{prop}[thm]{Proposition}
\newtheorem{lemma}[thm]{Lemma}
\newtheorem{cor}[thm]{Corollary}
\newtheorem{remark}[thm]{Remark}
\newtheorem{definition}[thm]{Definition}
\def\bN{\mathbb{N}}
\def\Prob{\mathrm{Prob}}
\def\bx{\mathbf{x}}
\def\bR{\mathbb{R}}
\def\sym{\mathrm{sym}}
\def\eps{\varepsilon}
\def\cI{\mathcal{I}}
\def\1{\mathbf{1}}
\def\cX{\mathcal{X}}
\begin{document}
\baselineskip=15pt

\title[A new approach to mutual information.\ II]
{A new approach to mutual information.\ II}

\author[F. Hiai]{Fumio Hiai$\,^{1}$}
\address{Graduate School of Information Sciences,
Tohoku University, Aoba-ku, Sendai 980-8579, Japan}
\author[T. Miyamoto]{Takuho Miyamoto}
\address{Graduate School of Information Sciences,
Tohoku University, Aoba-ku, Sendai 980-8579, Japan}

\thanks{$^1\,$Supported in part by Grant-in-Aid for Scientific Research (B)17340043.}

\thanks{AMS subject classification: Primary: 62B10, 94A17.}

\maketitle

\begin{abstract}
A new concept of mutual pressure is introduced for potential functions on both continuous
and discrete compound spaces via discrete micro-states of permutations, and its relations
with the usual pressure and the mutual information are established. This paper is a
continuation of the paper of Hiai and Petz in Banach Center Publications, Vol.~78.
\end{abstract}

\section*{Introduction}

Entropy and pressure are two basic quantities in statistical physics as well as information
theory, which are in the duality relation via the Legendre transforms of each other. Mutual
information is another important entropic quantity in information theory. The aim of this
paper is to seek for the mutual version of pressure whose Legendre transform is equal to
the mutual information.

The mutual information of two random variables $X$ and $Y$ is defined as the relative
entropy
$$
I(X\wedge Y):=S(\mu_{(X,Y)}\,\|\,\mu_X\otimes\mu_Y),
$$
where $\mu_{(X,Y)}$ is the joint distribution measure of $(X,Y)$ and $\mu_X\otimes\mu_Y$
is the product of the respective distribution measures of $X,Y$. This is also expressed as
$$
I(X\wedge Y)=-S(X,Y)+S(X)+S(Y)
$$
in terms of the Shannon entropy $S(\cdot)$ when $X,Y$ are discrete random variables. When
$X,Y$ are continuous variables, the expression holds with the Boltzmann-Gibbs entropy
$H(\cdot)$ in place of $S(\cdot)$ (as long as $H(X)$ and $H(Y)$ are finite). These
definitions and expressions are naturally extended to the case of more than two random
variables.

In the classical (= commutative) probability setting, we developed in the previous paper
\cite{HP1} a certain ``discretization approach" to the mutual information by using
``discrete micro-states" of permutations. In this paper we apply the same idea to introduce
the notion of the ``mutual pressure" for (continuous) potential functions on compound phase
spaces. We consider the $n$-fold product $[-R,R]^n$ of the bounded interval $[-R,R]$, which
is regarded as the phase space for an $n$-tuple of real bounded random variables. For a
real continuous function $h$ on $[-R,R]^n$ the usual pressure of $h$ is given by
$$
P(h):=\log\int_{[-R,R]^n}e^{h(\bx)}\,d\bx.
$$
For an $n$-tuple $(\mu_1,\dots,\mu_n)$ of probability measures on $[-R,R]$, we choose an
approximating sequence $(\xi_1(N),\dots,\xi_n(N))$ such that $\xi_i(N)$ are vectors in
$[-R,R]_\le^N$ (having the coordinates in increasing order) and $\xi_i(N)\to\mu_i$ in
moments as $N\to\infty$ for $1\le i\le n$. We define the mutual pressure
$P_\sym(h:\mu_1,\dots,\mu_n)$ of $h$ with respect to $(\mu_1,\dots,\mu_n)$ to be the
$\limsup$ as $N\to\infty$ of the asymptotic average
$$
{1\over N}\log\Biggl[{1\over(N!)^n}\sum_{\sigma_1,\dots,\sigma_n\in S_N}
\exp\bigl(N\kappa_N(h(\sigma_1(\xi_1(N)),\dots,\sigma_n(\xi_n(N))))\bigr)\biggr],
$$
over permutations $\sigma_i\in S_N$, where
$\kappa_N(h(\bx_1,\dots,\bx_n)):={1\over N}\sum_{j=1}^Nh(x_{1j},\dots,x_{nj})$ for
$\bx_i=(x_{i1},\dots,x_{iN})\in[-R,R]^N$, $1\le i\le n$ (Definition 2.1). Then the
inequality
$$
P(h)\ge P_\sym(h:\mu_1,\dots,\mu_n)+\sum_{i=1}^nH(\mu_i)
$$
is shown to hold, and the equality case is characterized in a natural way (Theorem 3.2).
Moreover, for a probability measure $\mu$ on $[-R,R]^n$ with marginal measures
$\mu_1,\dots,\mu_n$ on $[-R,R]$, the Legendre transform of $P_\sym(h:\mu_1,\dots,\mu_n)$
is shown to be equal to the mutual information $-H(\mu)+\sum_{i=1}^nH(\mu_i)$ as long as
$H(\mu_i)>-\infty$ for $1\le i\le n$ (Theorem 3.5).

The same approach can be also applied to the setting of discrete phase spaces, when the
Shannon entropy takes the place of the Boltzmann-Gibbs entropy. We deal with the discrete
case in Section 4 separately since the discussions are considerably different from the
continuous case due to the difference of entropies.

\section{Preliminaries in the continuous case}
\setcounter{equation}{0}

Let $R>0$ and $n\in\bN$ be fixed throughout. We denote by $\Prob([-R,R]^n)$ the set of
probability measures on the $n$-fold product $[-R,R]^n$ ($\subset\bR^n$), and by
$C_\bR([-R,R]^n)$ the real Banach space of real continuous functions on $[-R,R]^n$ with
the sup-norm $\|f\|:=\max\{|f(\bx)|:\bx\in[-R,R]^n\}$. The {\it Boltzmann-Gibbs entropy}
of a probability measure $\mu$ on $[-R,R]^n$ is defined to be
$$
H(\mu):=-\int_{[-R,R]^n}p(\bx)\log p(\bx)\,d\bx
$$
if $\mu$ has the joint density $p(\bx)$ with respect to the Lebesgue measure $d\bx$ on
$\bR^N$; otherwise $H(\mu):=-\infty$. A measure $\mu\in\Prob([-R,R]^n)$ typically arises
as the joint distribution of an $n$-tuple $(X_1,\dots,X_n)$ of real random variables
bounded by $R$ (i.e., $|X_i|\le R$) on a probability space. In this case, we have
$H(\mu)=H(X_1,\dots,X_n)$.

For avector $\bx=(x_1,\dots,x_N)$ in $\bR^N$ we write $\|\bx\|_1:=N^{-1}\sum_{j=1}^N|x_j|$.
The mean value of $\bx$ is given by
$$
\kappa_N(\bx):={1\over N}\sum_{j=1}^Nx_j.
$$
For each $N,m\in\bN$ and $\delta>0$ we define $\Delta_R(\mu;N,m,\delta)$ to be the set
of all $n$-tuples $(\bx_1,\dots,\bx_n)$ of $\bx_i=(x_{i1},\dots,x_{iN})\in[-R,R]^N$,
$1\le i\le n$, such that
$$
|\kappa_N(\bx_{i_1}\cdots\bx_{i_k})-\mu(x_{i_1}\cdots x_{i_k})|<\delta
$$
for all $i_1,\dots,i_k\in\{1,\dots,n\}$ with $1\le k\le m$, where
$\bx_{i_1}\cdots\bx_{i_k}$ stands for the pointwise product, i.e.,
$$
\bx_{i_1}\cdots\bx_{i_k}
:=(x_{i_11}\cdots x_{i_k1},x_{i_12}\cdots x_{i_k2},\ \dots\ ,x_{i_1N}\cdots x_{i_kN})
\in\bR^N,
$$
and
$$
\mu(x_{i_1}\cdots x_{i_k})
:=\int_{[-R,R]^n}x_{i_1}\cdots x_{i_k}\,d\mu(x_1,\dots,x_n).
$$
Then it is known \cite[5.1.1]{HP} that the limit
$$
\lim_{N\to\infty}{1\over N}\log\lambda_N^{\otimes n}(\Delta_R(\mu;N,m,\delta))
$$
exists, where $\lambda_N$ stands for the Lebesgue measure on $\bR^N$, and furthermore
we have
$$
H(\mu)=\lim_{m\to\infty,\delta\searrow0}\lim_{N\to\infty}
{1\over N}\log\lambda_N^{\otimes n}(\Delta_R(\mu;N,m,\delta)).
$$
In \cite{HP1} we introduced some kinds of mutual information $I_\sym(\mu)$ and
$\overline I_\sym(\mu)$, and established their relations with $H(\mu)$ as follows.

\begin{definition}\label{D-1.1}{\rm
Let $\mu\in\Prob([-R,R]^n)$ and $\mu_i$ be the restriction (or the marginal) of $\mu$
to the $i$th component $[-R,R]$ of $[-R,R]^n$ for $1\le i\le n$. Choose and fix a
sequence of $n$-tuples $\Xi(N)=(\xi_1(N),\dots,\xi_n(N))$, $N\in\bN$, of
$\bR^N$-vectors $\xi_i(N)$ in
$[-R,R]_\le^N:=\{(x_1,\dots,x_N)\in[-R,R]^N: x_1\le\dots\le x_N\}$ such that
$\kappa_N(\xi_i(N)^k)\to\int x^k\,d\mu_i(x)$ as $N\to\infty$ for all $k\in\bN$, i.e.,
$\xi_i(N)\to\mu_i$ in moments for $1\le i\le n$. We call such a sequence $\Xi(N)$ an
{\it approximating sequence} for $(\mu_1,\dots,\mu_n)$. For $N\in\bN$ the action of
the symmetric group $S_N$ on $\bR^N$ is given by
$$
\sigma(\bx):=(x_{\sigma^{-1}(1)},\dots,x_{\sigma^{-1}(N)})
$$
for $\sigma\in S_N$ and $\bx=(x_1,\dots,x_N)\in\bR^N$. For each $N,m\in\bN$ and
$\delta>0$ we define $\Delta_\sym(\mu:\Xi(N);N,m,\delta)$ to be the set of all
$(\sigma_1,\dots,\sigma_n)\in S_N^n$ such that
$$
(\sigma_1(\xi_1(N)),\dots,\sigma_n(\xi_n(N)))\in\Delta_R(\mu;N,m,\delta).
$$
We define
$$
I_\sym(\mu):=-\lim_{m\to\infty,\delta\searrow0}\limsup_{N\to\infty}
{1\over N}\log\gamma_{S_N}^{\otimes n}(\Delta_\sym(\mu:\Xi(N);N,m,\delta)),
$$
where $\gamma_{S_N}$ is the uniform probability measure on $S_N$, and define also
$\overline I_\sym(\mu)$ by replacing $\limsup$ by $\liminf$. This definitions of
$I_\sym(\mu)$ and $\overline I_\sym(\mu)$ are independent of the choice of an
approximating sequence $\Xi(N)$ for $(\mu_1,\dots,\mu_n)$ (\cite[Lemma 1.5]{HP1}).
}\end{definition}

\begin{thm}\label{T-1.2}{\rm(\cite[Theorem 1.6]{HP1})}\quad
For every $\mu\in\Prob([-R,R]^n)$ with marginals
$\mu_1,\dots,\allowbreak\mu_n\in\Prob([-R,R])$,
$$
H(\mu)=-I_\sym(\mu)+\sum_{i=1}^nH(\mu_i)
=-\overline I_\sym(\mu)+\sum_{i=1}^nH(\mu_i).
$$
\end{thm}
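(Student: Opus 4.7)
The plan is to derive both equalities at once by comparing the volume formula for $H(\mu)$ (and for each $H(\mu_i)$) recalled from \cite[5.1.1]{HP} with the permutation count underlying $\gamma_{S_N}^{\otimes n}(\Delta_\sym(\mu:\Xi(N);N,m,\delta))$. The starting point is the generic-fibre-$N!$ decomposition via the sorting map $[-R,R]^N\to[-R,R]_\le^N$:
$$
\lambda_N^{\otimes n}(\Delta_R(\mu;N,m,\delta))
=\int_{([-R,R]_\le^N)^n}\sum_{\sigma_1,\dots,\sigma_n\in S_N}
\1_{\Delta_R(\mu;N,m,\delta)}(\sigma_1\hat\bx_1,\dots,\sigma_n\hat\bx_n)\,d\hat\bx_1\cdots d\hat\bx_n.
$$
Since the pure moments $\kappa_N(\hat\bx_i^k)$ are permutation-invariant, the integrand is supported on the region $D$ where each $\hat\bx_i$ has all pure moments up to order $m$ within $\delta$ of those of $\mu_i$; this $D$ factors as a product and satisfies $\mathrm{Vol}(D)=\prod_{i=1}^n\frac{1}{N!}\lambda_N(\Delta_R(\mu_i;N,m,\delta))$. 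The aim is then to show that on $D$ the inner sum is well approximated by $(N!)^n\gamma_{S_N}^{\otimes n}(\Delta_\sym(\mu:\Xi(N);N,m,\delta'))$ for some $\delta'$ close to $\delta$.

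The key analytic input is a uniform-in-$N$ moment-to-$L^1$ estimate: by Weierstrass approximation on the compact interval $[-R,R]$, for each $\eta>0$ there exist $m_0(\eta)\in\bN$ and $\delta_0(\eta)>0$ such that any two sorted vectors $\hat\bx,\mathbf{y}\in[-R,R]_\le^N$ with $|\kappa_N(\hat\bx^k)-\kappa_N(\mathbf{y}^k)|<\delta_0$ for $1\le k\le m_0$ satisfy $\|\hat\bx-\mathbf{y}\|_1<\eta$ --- indeed, for sorted vectors the quantity $\|\hat\bx-\mathbf{y}\|_1$ coincides with the Wasserstein-$1$ distance of the corresponding empirical measures, and closeness of moments on $[-R,R]$ controls this distance via Kantorovich--Rubinstein duality. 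Taking $\mathbf{y}=\xi_i(N)$ and assuming $m\ge m_0(\eta)$ and $\delta\le\delta_0(\eta)$, every $\hat\bx_i\in D$ satisfies $\|\hat\bx_i-\xi_i(N)\|_1<\eta$ for all sufficiently large $N$. A routine telescoping estimate gives the permutation-uniform Lipschitz bound
$$
\bigl|\kappa_N((\sigma_1\bx_1)\cdots(\sigma_k\bx_k))-\kappa_N((\sigma_1\mathbf{y}_1)\cdots(\sigma_k\mathbf{y}_k))\bigr|\le kR^{k-1}\max_j\|\bx_j-\mathbf{y}_j\|_1,
$$
which combined with the previous step yields the pointwise sandwich
$$
\1_{\Delta_R(\mu;N,m,\delta-C\eta)}(\sigma\xi(N))\le\1_{\Delta_R(\mu;N,m,\delta)}(\sigma\hat\bx)\le\1_{\Delta_R(\mu;N,m,\delta+C\eta)}(\sigma\xi(N))
$$
for all $\sigma=(\sigma_1,\dots,\sigma_n)$ and $\hat\bx\in D$, where $C=C(R,m)$.

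Summing over $\sigma$, integrating over $D$, and using the product form of $\mathrm{Vol}(D)$, one obtains the two-sided bound
$$
\prod_{i=1}^n\lambda_N(\Delta_R(\mu_i;N,m,\delta))\cdot\gamma_{S_N}^{\otimes n}(\Delta_\sym(\mu:\Xi(N);N,m,\delta-C\eta))\le\lambda_N^{\otimes n}(\Delta_R(\mu;N,m,\delta))
$$
together with its reverse where $\delta-C\eta$ is replaced by $\delta+C\eta$. Taking $\frac{1}{N}\log$, then $\limsup_N$ (respectively $\liminf_N$), then $m\to\infty$, $\delta\searrow 0$, and finally $\eta\searrow 0$, and invoking the volume formula for $H(\mu)$ and each $H(\mu_i)$, delivers the two claimed equalities simultaneously. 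I expect the main subtlety to be the careful ordering of these limits: the Lipschitz constant $C$ depends on $m$, so $\eta\to 0$ must be taken \emph{after} the inner $N$-limit but \emph{before} the outer $(m,\delta)$-limit, and the uniform Weierstrass estimate on $[-R,R]$ is precisely what makes $m_0$ and $\delta_0$ independent of $N$ so that this parameter calculus closes.
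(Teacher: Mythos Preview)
The paper does not itself prove Theorem~\ref{T-1.2}; it is quoted from \cite[Theorem~1.6]{HP1} as a prerequisite. So there is no in-paper proof to compare against. Your strategy---the sorting-map decomposition of $\lambda_N^{\otimes n}(\Delta_R(\mu;N,m,\delta))$, the moment-to-$L^1$ control of sorted vectors via \cite[Lemma~4.3]{V2}, and the resulting sandwich on the permutation count---is the right architecture and is exactly the toolkit the present paper uses in the proofs of Lemma~\ref{L-1.2}, Proposition~\ref{P-2.6}, and Theorem~\ref{T-3.2}.

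There is, however, a genuine gap in your limit calculus, and you flag it yourself inconsistently: you first prescribe the order $N\to\infty$, then $(m,\delta)$, then $\eta$, and two lines later say $\eta$ must go \emph{before} $(m,\delta)$. Neither order closes with your sandwich as written. In the order $N,(m,\delta),\eta$: with $\eta>0$ fixed and $m\to\infty$, the perturbation $C(m)\eta\to\infty$, so $\delta-C(m)\eta<0$ forces $\Delta_\sym(\mu:\Xi(N);N,m,\delta-C(m)\eta)=\emptyset$ and the lower bound collapses to $-\infty$, while $\delta+C(m)\eta\to\infty$ trivializes the upper bound to $H(\mu)\le\sum_iH(\mu_i)$. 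In the order $N,\eta,(m,\delta)$: your own prerequisite $m\ge m_0(\eta)$ fails once $\eta$ shrinks past the resolution of the first $m$ moments, so the sandwich is no longer valid.

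The fix is to \emph{decouple} the parameters: use $(m',\delta')$ for the $\Delta_\sym$ term and a separate, larger $(m,\delta)$ for the domain $D$ and the marginal volumes. Concretely, from your indicator sandwich at level $m'$ one gets, for any $m\ge\max(m',m_0)$ and $\delta\le\delta_0$,
$$
\prod_{i=1}^n\lambda_N(\Delta_R(\mu_i;N,m,\delta))\cdot
\gamma_{S_N}^{\otimes n}(\Delta_\sym(\mu:\Xi(N);N,m',\delta'))
\le\lambda_N^{\otimes n}(\Delta_R(\mu;N,m',\delta'+\eps)),
$$
where $\eps=C(m')\eta(m,\delta)$ can be made as small as desired (for fixed $m'$) by taking $m$ large and $\delta$ small. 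Passing to $N\to\infty$, then $(m,\delta)\to(\infty,0)$, then $(m',\delta')\to(\infty,0)$ gives $-I_\sym(\mu)\le H(\mu)-\sum_iH(\mu_i)$; the companion inequality for $\overline I_\sym$ is obtained symmetrically. This is the parameter scheme actually used in \cite{HP1}, and it removes the circularity in your single-parameter version.
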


The {\it pressure} of $h\in C_\bR([-R,R]^n)$ is given by
$$
P(h):=\log\int_{[-R,R]^n}e^{h(\bx)}\,d\bx.
$$
It is well known that the pressure function $P(h)$ for $h\in C_\bR([-R,R]^n)$ and the
(minus) Boltzmann-Gibbs entropy $-H(\mu)$ for $\mu\in\Prob([-R,R]^n)$ are in the duality
relation in the sense that they are the Legendre transforms of each other. That is,
\begin{align}
H(\mu)&=\inf\{-\mu(h)+P(h):h\in C_\bR([-R,R]^n)\},
\quad\mu\in\Prob([-R,R]^n), \label{F-1.1}\\
P(h)&=\max\{\mu(h)+H(\mu):\mu\in\Prob([-R,R]^n)\},
\quad h\in C_\bR([-R,R]^n). \nonumber
\end{align}
Furthermore, for every $h\in C_\bR([-R,R]^n)$ the {\it Gibbs probability measure}
$\mu_h$ associated with $h$ is given by
$$
d\mu_h(\bx):={1\over Z_h}\,e^{h(\bx)}\,d\bx\quad\mbox{with}
\quad Z_h:=\int_{[-R,R]^n}e^{h(\bx)}\,d\bx=e^{P(h)},
$$
which is characterized by the variational equality
$$
P(h)=\mu_h(h)+H(\mu_h),
$$
that is, $\mu_h$ is a unique maximizer of $\mu\in\Prob([-R,R]^n)\mapsto\mu(h)+H(\mu)$.

\section{Mutual pressure and its Legendre transform}
\setcounter{equation}{0}

In the setting of continuous compound spaces described in Section 1, we introduce the
mutual version of pressure for continuous potential functions, and consider its Legendre
transform that is a version of the mutual information.

\begin{definition}\label{D-2.1}{\rm
Let $\mu_1,\dots,\mu_n\in\Prob([-R,R])$ be given and choose an approximating sequence
$\Xi(N)=(\xi_1(N),\dots,\xi_n(N))$ of $\xi_i(N)\in[-R,R]_\le^N$ for
$(\mu_1,\dots,\mu_n)$ as in Definition \ref{D-1.1}. For each $h\in C_\bR([-R,R]^n)$ and
$\bx_i=(x_{i1},\dots,x_{iN})\in[-R,R]^N$, $1\le i\le n$, define
\begin{equation}\label{F-2.1}
h(\bx_1,\dots,\bx_n)
:=(h(x_{11},\dots,x_{n1}),h(x_{12},\dots,x_{n2}),\dots,h(x_{1N},\dots,x_{nN}))
\in\bR^N
\end{equation}
and hence
\begin{equation}\label{F-2.2}
\kappa_N(h(\bx_1,\dots,\bx_n))
:={1\over N}\sum_{j=1}^Nh(x_{1j},\dots,x_{nj}).
\end{equation}
For each $h\in C_\bR([-R,R]^n)$ we define the {\it mutual pressure} of $h$ with respect
to $(\mu_1,\dots,\mu_n)$ to be
\begin{align*}
&P_\sym(h:\mu_1,\dots,\mu_n) \\
&\quad:=\limsup_{N\to\infty}{1\over N}\log\int_{S_N^n}
\exp\bigl(N\kappa_N(h(\sigma_1(\xi_1(N)),\dots,\sigma_n(\xi_n(N))))\bigr)
\,d\gamma_{S_N}^{\otimes n}(\sigma_1,\dots,\sigma_n) \\
&\quad\ =\limsup_{N\to\infty}{1\over N}\log\Biggl[
{1\over(N!)^n}\sum_{\sigma_1,\dots,\sigma_n\in S_N}
\exp\bigl(N\kappa_N(h(\sigma_1(\xi_1(N)),\dots,\sigma_n(\xi_n(N))))\bigr)\Biggr].
\end{align*}
}\end{definition}

The above definition is justified by the following:

\begin{lemma}\label{L-1.2}
$P_\sym(h:\mu_1,\dots,\mu_n)$ is independent of the choice of an approximating sequence
$\Xi(N)$ for $(\mu_1,\dots,\mu_n)$.
\end{lemma}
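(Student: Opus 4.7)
Let $\Xi(N)=(\xi_1(N),\dots,\xi_n(N))$ and $\Xi'(N)=(\xi'_1(N),\dots,\xi'_n(N))$ be two approximating sequences for $(\mu_1,\dots,\mu_n)$, and write
$$
Q_N(\Xi):=\int_{S_N^n}\exp\bigl(N\kappa_N(h(\sigma_1(\xi_1(N)),\dots,\sigma_n(\xi_n(N))))\bigr)\,d\gamma_{S_N}^{\otimes n}.
$$
The plan is to establish the uniform estimate
$$
\eta_N:=\sup_{(\sigma_1,\dots,\sigma_n)\in S_N^n}\bigl|\kappa_N(h(\sigma_1(\xi_1(N)),\dots,\sigma_n(\xi_n(N))))-\kappa_N(h(\sigma_1(\xi'_1(N)),\dots,\sigma_n(\xi'_n(N))))\bigr|\to0
$$
as $N\to\infty$. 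Granted this, the pointwise inequality $e^{-N\eta_N}Q_N(\Xi')\le Q_N(\Xi)\le e^{N\eta_N}Q_N(\Xi')$ gives $|\tfrac1N\log Q_N(\Xi)-\tfrac1N\log Q_N(\Xi')|\le\eta_N\to0$, so the two $\limsup$s defining the mutual pressure coincide.

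To estimate $\eta_N$, I would exploit the uniform continuity of $h$ on the compact set $[-R,R]^n$. Given $\eps>0$, choose $\eta>0$ such that $\|\mathbf{x}-\mathbf{y}\|_\infty<\eta$ implies $|h(\mathbf{x})-h(\mathbf{y})|<\eps/2$, and define the \emph{bad} index sets $B_i(N):=\{k\in\{1,\dots,N\}:|\xi_i(N)_k-\xi'_i(N)_k|\ge\eta\}$. For any $(\sigma_1,\dots,\sigma_n)$ and any $j$, if $\sigma_i^{-1}(j)\notin B_i(N)$ for every $i$, then the $j$-th term of the $\kappa_N$ difference is at most $\eps/2$; otherwise it is trivially at most $2\|h\|$. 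Since each $\sigma_i$ is a bijection, the number of ``bad'' $j$'s is at most $\sum_{i=1}^n|B_i(N)|$, so uniformly in $\sigma$,
$$
\eta_N\le\frac{\eps}{2}+\frac{2\|h\|}{N}\sum_{i=1}^n|B_i(N)|.
$$

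The main obstacle is thus reduced to showing $|B_i(N)|/N\to0$ for each $i$, and here the hypothesis that both $\xi_i(N),\xi'_i(N)\in[-R,R]_\le^N$ are sorted in increasing order is essential. Convergence in moments of the empirical measure $\nu_{\xi_i(N)}:=\frac1N\sum_k\delta_{\xi_i(N)_k}$ to $\mu_i$, combined with Stone--Weierstrass on the compact interval $[-R,R]$, gives weak convergence $\nu_{\xi_i(N)}\to\mu_i$, and similarly for $\nu_{\xi'_i(N)}$. On compact intervals, weak convergence is metrized by the $W_1$-Wasserstein distance, so $W_1(\nu_{\xi_i(N)},\nu_{\xi'_i(N)})\to0$ by the triangle inequality. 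The one-dimensional optimal transport identity for two empirical measures of equal cardinality with samples sorted in increasing order reads
$$
W_1(\nu_{\xi_i(N)},\nu_{\xi'_i(N)})=\frac{1}{N}\sum_{k=1}^N|\xi_i(N)_k-\xi'_i(N)_k|,
$$
and Markov's inequality then yields $|B_i(N)|/N\le\eta^{-1}W_1(\nu_{\xi_i(N)},\nu_{\xi'_i(N)})\to0$. Combining with the previous paragraph and letting $\eps\to0$ concludes the argument.
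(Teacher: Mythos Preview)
Your proof is correct and shares the same core idea as the paper's: both establish that $\frac{1}{N}\sum_k|\xi_i(N)_k-\xi'_i(N)_k|\to 0$ (the paper by citing \cite[Lemma~4.3]{V2}, you by identifying this quantity as the $W_1$ distance between the empirical measures and invoking weak convergence on a compact interval), and then use this to control the $\kappa_N$-difference uniformly over all permutations. The only execution difference is that the paper first reduces to polynomial $h$ via the norm continuity of $P_\sym$ (Proposition~\ref{P-2.3}(3)) and then applies a Lipschitz estimate, whereas you work directly with continuous $h$ via uniform continuity and a good/bad index split with Markov's inequality---slightly more self-contained, but the same substance.
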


\begin{proof}
Let $\Xi'(N)=(\xi_1'(N),\dots,\xi_n'(N))$ be another approximating sequence for
$(\mu_1,\dots,\mu_n)$. We write $P_\sym(h:\Xi)$ and $P_\sym(h:\Xi')$ for
$P_\sym(h:\mu_1,\dots,\mu_n)$ defined in Definition \ref{D-2.1} with $\Xi(N)$ and
$\Xi'(N)$, respectively. Since $P_\sym(h:\Xi)$ and $P_\sym(h:\Xi')$ are continuous
in $h$ in the norm (see Proposition \ref{P-2.3}\,(3) below), it suffices to prove that
$P_\sym(p:\Xi)=P_\sym(p:\Xi')$ for any real polynomial $p$ of $n$ variables
$x_1,\dots,x_n$. Since $\xi_i(N),\xi_i'(N)\in[-R,R]_\le^N$, for any $\eps>0$ there
exists a $\delta>0$ such that, for every $N\in\bN$, if
$\|\xi_i(N)-\xi_i'(N)\|_1<\delta$ for all $i=1,\dots,n$, then
$$
|\kappa_N(p(\sigma_1(\xi_1(N)),\dots,\sigma_n(\xi_n(N))))
-\kappa_N(p(\sigma_1(\xi_1'(N)),\dots,\sigma_n(\xi_n'(N))))|<\eps
$$
for all $(\sigma_1,\dots,\sigma_n)\in S_N^n$. Thanks to \cite[Lemma 4.3]{V2} (also
\cite[4.3.4]{HP}), there exists an $N_0\in\bN$ such that if $N\ge N_0$ then
$\|\xi_i(N)-\xi_i'(N)\|_1<\delta$ for all $i=1,\dots,n$. Hence we have for every $N\ge N_0$
\begin{align*}
&\Bigg|{1\over N}\log\Biggl[{1\over(N!)^n}\sum_{\sigma_1,\dots,\sigma_n\in S_N}
\exp\bigl(N\kappa_N(p(\sigma_1(\xi_1(N)),\dots,\sigma_n(\xi_n(N))))\bigr)\Biggr] \\
&\qquad-{1\over N}\log\Biggl[{1\over(N!)^n}\sum_{\sigma_1,\dots,\sigma_n\in S_N}
\exp\bigl(N\kappa_N(p(\sigma_1(\xi_1'(N)),\dots,\sigma_n(\xi_n'(N))))\bigr)\Biggr]
\Bigg|<\eps.
\end{align*}
This implies that $|P_\sym(p:\Xi)-P_\sym(p:\Xi')|\le\eps$. Since $\eps>0$ is arbitrary,
the desired conclusion follows.
\end{proof}

The following are basic properties of $P_\sym(h:\mu_1,\dots,\mu_n)$, whose proofs are
straightforward.

\begin{prop}\label{P-2.3}
Let $\mu_1,\dots,\mu_n\in\Prob([-R,R])$.
\begin{itemize}
\item[(1)] When $n=1$, $P_\sym(h:\mu_1)=\mu_1(h)$ for all $h\in C_\bR([-R,R])$.
\item[(2)] $P_\sym(h:\mu_1,\dots,\mu_n)$ is a convex and increasing function on
$C_\bR([-R,R]^n)$.
\item[(3)] $|P_\sym(h:\mu_1,\dots,\mu_n)-P_\sym(h':\mu_1,\dots,\mu_n)|\le\|h-h'\|$
for all $h,h'\in C_\bR([-R,R]^n)$.
\item[(4)] If $1\le m<n$, $h^{(1)}\in C_\bR([-R,R]^m)$,
$h^{(2)}\in C_\bR([-R,R]^{n-m})$ and
$h(x_1,\dots,x_n):=h^{(1)}(x_1,\dots,x_m)+h^{(2)}(x_{m+1},\dots,x_n)$, then
$$
P_\sym(h:\mu_1,\dots,\mu_n)
\le P_\sym(h^{(1)}:\mu_1,\dots,\mu_m)+P_\sym(h^{(2)}:\mu_{m+1},\dots,\mu_n).
$$
\end{itemize}
\end{prop}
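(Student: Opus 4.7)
The plan is to verify each of the four claims by unpacking the definition of $P_\sym$ and applying a single elementary principle: monotonicity of the exponential, H\"older's inequality, or subadditivity of $\limsup$. For convenience write
$$
f_N(h):={1\over N}\log\int_{S_N^n}\exp\bigl(N\kappa_N(h(\sigma_1(\xi_1(N)),\dots,\sigma_n(\xi_n(N))))\bigr)\,d\gamma_{S_N}^{\otimes n},
$$
so that $P_\sym(h:\mu_1,\dots,\mu_n)=\limsup_{N\to\infty}f_N(h)$. The strategy in every part is to establish the required property for $f_N$ at each finite $N$ and then pass to the $\limsup$.

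For (1), the sum $\sum_j h(x_j)$ is symmetric in its arguments, so $\kappa_N(h(\sigma_1(\xi_1(N))))=\kappa_N(h(\xi_1(N)))$ does not depend on $\sigma_1$, and $f_N(h)=\kappa_N(h(\xi_1(N)))$. Since $\xi_1(N)\to\mu_1$ in moments on the compact interval $[-R,R]$, the empirical measures converge weakly to $\mu_1$, so after a Stone--Weierstrass approximation of $h$ by polynomials one obtains $f_N(h)\to\mu_1(h)$; the $\limsup$ is then this same value. For (3), the pointwise estimate $|\kappa_N(h(\cdots))-\kappa_N(h'(\cdots))|\le\|h-h'\|$, valid for every $(\sigma_1,\dots,\sigma_n)\in S_N^n$, sandwiches $\exp(N\kappa_N(h(\cdots)))$ between $e^{\pm N\|h-h'\|}\exp(N\kappa_N(h'(\cdots)))$, so $|f_N(h)-f_N(h')|\le\|h-h'\|$ for every $N$, and $\limsup$ preserves the Lipschitz bound. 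For (2), monotonicity is immediate from the monotonicity of the integrand in $h$; convexity at each finite $N$ follows from H\"older's inequality applied with $a=\kappa_N(h(\cdots))$ and $b=\kappa_N(h'(\cdots))$, giving $f_N(\lambda h+(1-\lambda)h')\le\lambda f_N(h)+(1-\lambda)f_N(h')$, and the $\limsup$ of a pointwise convex family of functionals is convex.

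For (4), the decomposition $h=h^{(1)}+h^{(2)}$ splits $\kappa_N(h(\cdots))$ as a sum over the two disjoint blocks of permutation variables $(\sigma_1,\dots,\sigma_m)$ and $(\sigma_{m+1},\dots,\sigma_n)$; because $\gamma_{S_N}^{\otimes n}$ factors as a product over these blocks, the integral also factors, yielding $f_N(h)=f_N^{(1)}(h^{(1)})+f_N^{(2)}(h^{(2)})$ with the obvious sub-definitions (here one uses that the truncations of the approximating sequence $\Xi(N)$ to the first $m$ and last $n-m$ coordinates are approximating sequences for $(\mu_1,\dots,\mu_m)$ and $(\mu_{m+1},\dots,\mu_n)$, respectively). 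Subadditivity of $\limsup$ then yields the stated inequality. The only place where any care is needed is precisely here: one obtains only an inequality, not equality, in (4), because the subsequences realizing the two $\limsup$'s need not coincide. This is also the underlying reason equality can be achieved in (1), where $f_N$ is a genuine limit rather than merely a $\limsup$.
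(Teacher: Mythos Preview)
Your proposal is correct and matches what the paper has in mind: the authors simply state that the proofs are ``straightforward'' and give no details, and your argument is exactly the natural verification --- symmetry of $\kappa_N$ for (1), monotonicity and H\"older for (2), the pointwise Lipschitz bound on $\kappa_N$ for (3), and factorization of the product measure plus subadditivity of $\limsup$ for (4). Your remark that (4) yields only an inequality precisely because the $\limsup$'s need not be attained along a common subsequence is the right explanation.
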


\begin{definition}\label{D-2.4}{\rm
Let $\mu\in\Prob([-R,R]^n)$ with marginals $\mu_1,\dots,\mu_n\in\Prob([-R,R])$.
Define
$$
\cI_\sym(\mu):=\sup\{\mu(h)-P_\sym(h:\mu_1,\dots,\mu_n):h\in C_\bR([-R,R]^n)\},
$$
that is, $\cI_\sym(\mu)$ is the Legendre transform of $P_\sym(h:\mu_1,\dots,\mu_n)$.
Furthermore, we say that $\mu$ is {\it mutually equilibrium} associated with
$h\in C_\bR([-R,R]^n)$ if the variational equality
$$
\cI_\sym(\mu)=\mu(h)-P_\sym(h:\mu_1,\dots,\mu_n)
$$
holds.
}\end{definition}

The next proposition says that $P_\sym(h:\mu_1,\dots,\mu_n)$ is the converse Legendre
transform of $\cI_\sym(\mu)$.

\begin{prop}\label{P-2.5}
For every $h\in C_\bR([-R,R]^n)$ and $\mu_1,\dots,\mu_n\in\Prob([-R,R])$,
$$
P_\sym(h:\mu_1,\dots,\mu_n)
=\max\{\mu(h)-\cI_\sym(\mu):\mu\in\Prob_{\mu_1,\dots,\mu_n}([-R,R]^n)\},
$$
where $\Prob_{\mu_1,\dots,\mu_n}([-R,R]^n)$ is the set of all $\mu\in\Prob([-R,R]^n)$
whose restriction to the $i$th component of $[-R,R]^n$ is $\mu_i$ for $1\le i\le n$.
Hence there exists a mutually equilibrium probability measure associated with $h$
whose marginals are $\mu_1,\dots,\mu_n$.
\end{prop}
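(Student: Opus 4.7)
The plan is to recognize Proposition \ref{P-2.5} as a Fenchel--Moreau biconjugate identity. By Proposition \ref{P-2.3}\,(2)--(3), the map $h \mapsto P_\sym(h:\mu_1,\dots,\mu_n)$ is a convex, norm-Lipschitz (hence lower semicontinuous) real-valued function on the Banach space $X := C_\bR([-R,R]^n)$, whose topological dual is the space $M_\bR([-R,R]^n)$ of real signed Borel measures. Extending the definition of $\cI_\sym$ to all of $M_\bR([-R,R]^n)$ by the same formula $\cI_\sym(\nu) := \sup_{h}\{\nu(h) - P_\sym(h:\mu_1,\dots,\mu_n)\}$, the Fenchel--Moreau theorem gives
$$
P_\sym(h:\mu_1,\dots,\mu_n) = \sup\{\nu(h) - \cI_\sym(\nu) : \nu \in M_\bR([-R,R]^n)\}.
$$
The proof then reduces to two tasks: (i) show $\cI_\sym(\nu) = +\infty$ whenever $\nu$ lies outside $\Prob_{\mu_1,\dots,\mu_n}([-R,R]^n)$; (ii) show the supremum is attained on that set.

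For (i), I handle three obstructions. If $\nu(\1) \ne 1$, plug in $h = c\1$ to obtain directly from the definition $P_\sym(c\1:\mu_1,\dots,\mu_n) = c$, so $\nu(c\1) - P_\sym(c\1:\mu_1,\dots,\mu_n) = c(\nu(\1)-1)$ is unbounded as $c \to \pm\infty$. If $\nu$ fails to be positive, choose $g \ge 0$ in $C_\bR([-R,R]^n)$ with $\nu(g) < 0$ and use $h = -cg$ with $c > 0$; monotonicity (Proposition \ref{P-2.3}\,(2)) yields $P_\sym(-cg:\mu_1,\dots,\mu_n) \le P_\sym(0:\mu_1,\dots,\mu_n) = 0$, so the difference exceeds $-c\nu(g) \to +\infty$. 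If some marginal $\nu_i$ of $\nu$ differs from $\mu_i$, choose $g \in C_\bR([-R,R])$ with $\nu_i(g) \ne \mu_i(g)$ and set $h(x_1,\dots,x_n) := cg(x_i)$; the quantity $\kappa_N(h(\sigma_1(\xi_1(N)),\dots,\sigma_n(\xi_n(N))))$ reduces to $c\kappa_N(g(\xi_i(N)))$ independently of the permutations, so direct computation gives $P_\sym(h:\mu_1,\dots,\mu_n) = c\mu_i(g)$, and $\nu(h) - P_\sym(h:\mu_1,\dots,\mu_n) = c(\nu_i(g) - \mu_i(g))$ is again unbounded in $c$.

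For (ii), note that $\Prob_{\mu_1,\dots,\mu_n}([-R,R]^n)$ is nonempty (it contains $\mu_1 \otimes \cdots \otimes \mu_n$) and is a weak*-closed subset of the weak*-compact set $\Prob([-R,R]^n)$, since each marginal constraint $\mu \mapsto \int g(x_i)\,d\mu = \mu_i(g)$ is weak*-continuous in $\mu$. As a supremum of weak*-continuous affine functionals in $\mu$, the extended $\cI_\sym$ is weak*-lower semicontinuous, so $\mu \mapsto \mu(h) - \cI_\sym(\mu)$ is weak*-upper semicontinuous on the compact set and attains its supremum, yielding a mutually equilibrium measure with marginals $\mu_1,\dots,\mu_n$. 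The main obstacle is step (i), in particular the marginal case, where one must directly compute $P_\sym$ on functions depending on only one coordinate; once these values are pinned down the remainder is a standard convex-duality argument.
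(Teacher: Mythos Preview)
Your argument is correct and follows essentially the same route as the paper: both apply Fenchel--Moreau to the convex Lipschitz function $h\mapsto P_\sym(h:\mu_1,\dots,\mu_n)$, identify the conjugate as $+\infty$ off $\Prob_{\mu_1,\dots,\mu_n}([-R,R]^n)$ via the same three cases (non-positivity, wrong total mass, wrong marginal), and obtain attainment by weak* compactness and lower semicontinuity of $\cI_\sym$. The computations you single out---$P_\sym(c\1)=c$, $P_\sym(0)=0$, and $P_\sym(cg(x_i))=c\mu_i(g)$---match the paper's claims (a)--(c) exactly.
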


\begin{proof}
One can consider $\Prob([-R,R]^n)$ as a closed convex subset of the dual (real) Banach
space $C_\bR([-R,R]^n)^*$ of $C_\bR([-R,R]^n)$. Let
$F:C_\bR([-R,R]^n)^*\to(-\infty,+\infty]$ be the conjugate (or the Legendre transform)
of $P_\sym(h:\mu_1,\dots,\mu_n)$, i.e.,
$$
F(\psi):=\sup\{\psi(h)-P_\sym(h:\mu_1,\dots,\mu_n):h\in C_\bR([-R,R]^n)\}
$$
for $\psi\in C_\bR([-R,R]^n)^*$. We then prove that
\begin{equation}\label{F-2.3}
\begin{cases}
F(\mu)=\cI_\sym(\mu) & \text{if $\mu\in\Prob_{\mu_1,\dots,\mu_n}([-R,R]^n)$}, \\
F(\psi)=+\infty &
\text{if $\psi\in C_\bR([-R,R]^n)^*\setminus\Prob_{\mu_1,\dots,\mu_n}([-R,R]^n)$}.
\end{cases}
\end{equation}
The first equality is just the definition of $\cI_\sym(\mu)$. The second follows from
the following three claims.

(a)\enspace If $\psi(h)<0$ for some $h\in C_\bR([-R,R]^n)$ with $h\ge0$, then
$F(\psi)=+\infty$. In fact, for $\alpha<0$ we have
$P_\sym(\alpha h:\mu_1,\dots,\mu_n)\le P_\sym(0:\mu_1,\dots,\mu_n)=0$ by Proposition
\ref{P-2.3}\,(2) so that
$$
\psi(\alpha h)-P_\sym(\alpha h:\mu_1,\dots,\mu_n)
\ge\alpha\psi(h)\longrightarrow+\infty
$$
as $\alpha\to-\infty$.

(b)\enspace If $\psi(\1)\ne1$, then $F(\psi)=+\infty$. In fact, since
$P_\sym(\alpha\1:\mu_1,\dots,\mu_n)=\alpha$ for $\alpha\in\bR$, it follows that
$$
\psi(\alpha h)-P_\sym(\alpha\1:\mu_1,\dots,\mu_n)
=\alpha(\psi(\1)-1)\longrightarrow+\infty
$$
as $\alpha\to+\infty$ or $-\infty$ accordingly as $\psi(\1)<1$ or $\psi(\1)>1$.

(c)\enspace Assume that $\mu\in\Prob([-R,R]^n)$ but
$\mu\not\in\Prob_{\mu_1,\dots,\mu_n}([-R,R]^n)$. Then there exists an
$f\in C_\bR([-R,R])$ such that $\mu(f^{(i)})>\mu_i(f)$ for some $1\le i\le n$, where
$f^{(i)}(\bx):=f(x_i)$ for $\bx=(x_1,\dots,x_n)\in[-R,R]^n$. Since
$$
P_\sym(\alpha f^{(i)}:\mu_1,\dots,\mu_n)
=\lim_{N\to\infty}\alpha f(\xi_i(N))=\alpha\mu_i(f)
$$
for $\alpha\in\bR$, it follows that
$$
\mu(\alpha f^{(i)})-P_\sym(\alpha f^{(i)}:\mu_1,\dots,\mu_n)
=\alpha(\mu(f^{(i)})-\mu_i(f))\longrightarrow+\infty
$$
as $\alpha\to+\infty$.

Hence \eqref{F-2.3} is proved. Since $P_\sym(h:\mu_1,\dots,\mu_n)$ is a convex
continuous function on $C_\bR([-R,R]^n)$ by Proposition \ref{P-2.3}, the duality
theorem for conjugate functions implies that
\begin{align*}
P_\sym(h:\mu_1,\dots,\mu_n)
&=\sup\{\psi(h)-F(\psi):\psi\in C_\bR([-R,R]^n)^*\} \\
&=\sup\{\mu(h)-\cI_\sym(\mu):\mu\in\Prob_{\mu_1,\dots,\mu_n}([-R,R]^n)\}.
\end{align*}
Since $\Prob_{\mu_1,\dots,\mu_n}([-R,R]^n)$ is weakly* compact and $\cI_\sym(\mu)$ is
weakly* lower semicontinuous on $\Prob_{\mu_1,\dots,\mu_n}([-R,R]^n)$, the above latter
supremum is attained by some $\mu\in\Prob_{\mu_1,\dots,\mu_n}([-R,R]^n)$.
\end{proof}

\begin{prop}\label{P-2.6}
The function $P_\sym(h:\mu_1,\dots,\mu_n)$ is jointly continuous on
$C_\bR([-R,R]^n)\times(\Prob([-R,R]))^n$ with respect to the norm topology on
$C_\bR([-R,R]^n)$ and the weak* topology on $\Prob([-R,R])$.
\end{prop}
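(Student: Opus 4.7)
The plan is to reduce joint continuity first to weak$^*$ continuity in the marginals for a fixed continuous $h$, and then to exploit Lipschitz continuity of $h$ via a convenient choice of approximating sequences. The first reduction is immediate from Proposition \ref{P-2.3}\,(3), which gives $h\mapsto P_\sym(h:\mu_1,\dots,\mu_n)$ as a 1-Lipschitz function in the sup-norm with a constant independent of $(\mu_1,\dots,\mu_n)$. Using this uniform bound together with sup-norm density of polynomials (Stone--Weierstrass), I further reduce to the case where $h$ is a polynomial, hence Lipschitz on $[-R,R]^n$ with some constant $L$ (in the $\ell^1$ norm on $\bR^n$).

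For the polynomial case, the idea is to attach to each marginal $\mu_i$ its quantile approximating sequence $\xi_i(N)\in[-R,R]_\le^N$, $\xi_i(N)_j:=F_{\mu_i}^{-1}((j-1/2)/N)$, where $F_{\mu_i}$ is the distribution function of $\mu_i$. A standard Riemann-sum argument shows that $\kappa_N(\xi_i(N)^k)\to\int x^k\,d\mu_i(x)$ for every $k$, so this is a valid approximating sequence and by Lemma \ref{L-1.2} it may be used to compute $P_\sym$. Crucially, for any other marginal $\mu_i'$ with quantile sequence $\xi_i'(N)$,
$$
\|\xi_i(N)-\xi_i'(N)\|_1
={1\over N}\sum_{j=1}^N|F_{\mu_i}^{-1}(t_j)-F_{\mu_i'}^{-1}(t_j)|
\longrightarrow\int_0^1|F_{\mu_i}^{-1}(t)-F_{\mu_i'}^{-1}(t)|\,dt=W_1(\mu_i,\mu_i')
$$
as $N\to\infty$, where $t_j=(j-1/2)/N$, by Riemann-sum convergence applied to the bounded BV integrand.

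The remaining step is a Lipschitz comparison. For any $\sigma_1,\dots,\sigma_n\in S_N$, since each $\sigma_i$ preserves the $\ell^1$ norm on $\bR^N$ and $h$ is $L$-Lipschitz,
$$
|\kappa_N(h(\sigma_1(\xi_1(N)),\dots,\sigma_n(\xi_n(N))))
-\kappa_N(h(\sigma_1(\xi_1'(N)),\dots,\sigma_n(\xi_n'(N))))|
\le L\sum_{i=1}^n\|\xi_i(N)-\xi_i'(N)\|_1.
$$
Exponentiating, integrating over $S_N^n$, taking $(1/N)\log$, and passing to $\limsup$ in $N$ (using the elementary fact that two sequences differing by an error tending to $c$ have $\limsup$s differing by at most $c$) yields
$$
|P_\sym(h:\mu_1,\dots,\mu_n)-P_\sym(h:\mu_1',\dots,\mu_n')|
\le L\sum_{i=1}^nW_1(\mu_i,\mu_i').
$$
Since $W_1$ metrizes the weak$^*$ topology on $\Prob([-R,R])$, this gives weak$^*$ continuity of $P_\sym$ in the marginals for polynomial $h$, and combined with the uniform-in-$\mu$ norm-continuity in $h$, yields joint continuity.

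The main obstacle I expect is the second displayed identity: one has to verify both that the quantile sequence genuinely approximates $\mu_i$ in moments (so Lemma \ref{L-1.2} applies) and that the $\ell^1$ distance between two quantile sequences converges to the full $W_1$ distance, not to something merely comparable. Once this is settled, everything else is a soft combination of the Lipschitz estimate, the $\ell^1$-isometry of permutations, and the $\limsup$ comparison above.
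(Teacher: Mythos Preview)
Your argument is correct. The overall architecture matches the paper's: reduce joint continuity to continuity in the marginals via the uniform $1$-Lipschitz bound in $h$ (Proposition \ref{P-2.3}\,(3)), then pass to a polynomial $h$, and finally control the difference in $P_\sym$ through an $\ell^1$ comparison of approximating sequences plugged into the Lipschitz polynomial. The one substantive difference is how you obtain $\ell^1$ closeness of the approximating sequences. The paper works with \emph{arbitrary} approximating sequences and invokes Voiculescu's lemma \cite[Lemma 4.3]{V2} (moment closeness on $[-R,R]_\le^N$ implies $\ell^1$ closeness) to pass from weak$^*$ closeness of the marginals to $\|\xi_i(N)-\xi_i'(N)\|_1<\delta_1$ for large $N$. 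You instead fix the quantile sequences and use the identity $\int_0^1|F_{\mu_i}^{-1}-F_{\mu_i'}^{-1}|\,dt=W_1(\mu_i,\mu_i')$ together with Riemann-sum convergence for bounded BV integrands. Your route avoids the external reference and yields the sharper quantitative statement
\[
|P_\sym(h:\mu_1,\dots,\mu_n)-P_\sym(h:\mu_1',\dots,\mu_n')|\le L\sum_{i=1}^nW_1(\mu_i,\mu_i')
\]
for polynomial (indeed, for any Lipschitz) $h$; the paper's route is agnostic about the particular approximating sequence and reuses machinery already needed elsewhere (Lemma \ref{L-1.2}). The technical points you flag---moment convergence of the quantile sequence and convergence of the $\ell^1$ gap to $W_1$---are both handled by the BV/Riemann-sum argument you sketch, so there is no genuine gap.
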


\begin{proof}
Let $h,h'\in C_\bR([-R,R]^n)$ and $\mu_i,\mu_i'\in\Prob([-R,R])$, $1\le i\le n$. For
any $\eps>0$ choose a real polynomial $p$ of $n$ variables $x_1,\dots,x_n$ such that
$\|p-h\|<\eps$. We have
\begin{align*}
&|P_\sym(h:\mu_1,\dots,\mu_n)-P_\sym(h':\mu_1',\dots,\mu_n')| \\
&\qquad\le|P_\sym(h:\mu_1,\dots,\mu_n)-P_\sym(p:\mu_1,\dots,\mu_n)| \\
&\qquad\qquad+|P_\sym(p:\mu_1,\dots,\mu_n)-P_\sym(p:\mu_1',\dots,\mu_n')| \\
&\qquad\qquad+|P_\sym(p:\mu_1',\dots,\mu_n')-P_\sym(h':\mu_1',\dots,\mu_n')| \\
&\qquad\le\|h-p\|+\|p-h'\|
+|P_\sym(p:\mu_1,\dots,\mu_n)-P_\sym(p:\mu_1',\dots,\mu_n')| \\
&\qquad\le2\eps+\|h-h'\|
+|P_\sym(p:\mu_1,\dots,\mu_n)-P_\sym(p:\mu_1',\dots,\mu_n')|
\end{align*}
by Proposition \ref{P-2.3}\,(3). Recall that the weak* topology on $\Prob([-R,R])$ is
metrizable with the metric
$\rho(\nu,\nu'):=\sum_{k=1}^\infty(2R)^{-k}|\nu(x^k)-\nu'(x^k)|$, where
$\nu(x^k):=\int x^k\,d\nu(x)$. It suffices to show that there exists a $\delta>0$ such
that if $\rho(\mu_i,\mu_i')<\delta$ for $1\le i\le n$, then
$$
|P_\sym(p:\mu_1,\dots,\mu_n)-P_\sym(p:\mu_1',\dots,\mu_n')|\le\eps.
$$
One can choose a $\delta_1>0$ such that, for every $N\in\bN$, if
$\bx_i,\bx_i'\in[-R,R]_\le^N$ and $\|\bx_i-\bx_i'\|_1<\delta_1$ for
$1\le i\le n$, then
$$
|\kappa_N(p(\sigma_1(\bx_1),\dots,\sigma_n(\bx_n)))
-\kappa_N(p(\sigma_1(\bx_1'),\dots,\sigma_n(\bx_n')))|<\eps
$$
for all $(\sigma_1,\dots,\sigma_n)\in S_N^n$. Thanks to \cite[Lemma 4.3]{V2} one can
choose an $m\in\bN$ and a $\delta_2>0$ such that, for every $N\in\bN$, if
$\bx,\bx'\in[-R,R]_\le^N$ and $|\kappa_N(\bx^k)-\kappa_N(\bx^{\prime k})|<\delta_2$
for all $k=1,\dots,m$, then $\|\bx-\bx'\|_1<\delta_1$. Then choose a $\delta_3>0$ such
that if $\nu,\nu'\in\Prob([-R,R])$ and $\rho(\nu,\nu')<\delta_3$, then
$|\nu(x^k)-\nu'(x^k)|<\delta_2/2$ for all $k=1,\dots,m$. Now assume that
$\mu_i,\mu_i'\in\Prob([-R,R])$ and $\rho(\mu_i,\mu_i')<\delta_3$ for $1\le i\le n$.
Let $\Xi(N)=(\xi_1(N),\dots,\xi_n(N))$ and $\Xi'(N)=(\xi_1'(N),\dots,\xi_n'(N))$ be
approximating sequences for $(\mu_1,\dots,\mu_n)$ and $(\mu_1',\dots,\mu_n')$,
respectively, with $\xi_i(N),\xi_i'(N)\in[-R,R]_\le^N$. There exists an $N_0\in\bN$
such that if $N\ge N_0$ then for $1\le i\le n$ we have
\begin{align*}
&|\kappa_N(\xi_i(N)^k)-\kappa_N(\xi_i'(N)^k)| \\
&\qquad\le|\kappa_N(\xi_i(N)^k)-\mu_i(x^k)|+|\mu_i(x^k)-\mu_i'(x^k)|
+|\mu_i'(x^k)-\kappa_n(\xi_i'(N)^k)|<\delta_2
\end{align*}
for all $k=1,\dots,m$ so that $\|\xi_i(N)-\xi_i'(N)\|_1<\delta_1$. Hence if $N\ge N_0$
then we have
$$
|\kappa_N(p(\sigma_1(\xi_1(N)),\dots,\sigma_n(\xi_n(N))))
-\kappa_N(p(\sigma_1(\xi_1'(N)),\dots,\sigma_n(\xi_n'(N))))|<\eps
$$
for all $(\sigma_1,\dots,\sigma_n)\in S_N^n$. This implies that
$|P_\sym(p:\mu_1,\dots,\mu_n)-P_\sym(p:\mu_1',\dots,\mu_n')|\le\eps$, as required.
\end{proof}

\begin{cor}\label{C-2.7}
The function $\cI_\sym(\mu)$ is weakly* lower semicontinuous on $\Prob([-R,R]^n)$.
\end{cor}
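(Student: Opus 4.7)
The plan is to realize $\cI_\sym(\mu)$ as a pointwise supremum of weakly* continuous functionals of $\mu$, from which lower semicontinuity is automatic. By definition,
$$
\cI_\sym(\mu)=\sup_{h\in C_\bR([-R,R]^n)}\bigl\{\mu(h)-P_\sym(h:\mu_1,\dots,\mu_n)\bigr\},
$$
where $\mu_i$ denotes the $i$th marginal of $\mu$. So it suffices to check that for each fixed $h$ the functional $\Phi_h(\mu):=\mu(h)-P_\sym(h:\mu_1,\dots,\mu_n)$ is weakly* continuous on $\Prob([-R,R]^n)$; a sup of continuous functions is lower semicontinuous.

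Continuity of the first piece $\mu\mapsto\mu(h)$ is immediate from the definition of the weak* topology, since $h\in C_\bR([-R,R]^n)$. For the second piece I would first observe that the marginal map $\Prob([-R,R]^n)\to(\Prob([-R,R]))^n$, $\mu\mapsto(\mu_1,\dots,\mu_n)$, is weakly* continuous: indeed, for $f\in C_\bR([-R,R])$ one has $\mu_i(f)=\mu(f\circ\pi_i)$, where $\pi_i$ is the $i$th coordinate projection, and $f\circ\pi_i\in C_\bR([-R,R]^n)$, so weak* convergence $\mu^{(\alpha)}\to\mu$ forces $\mu_i^{(\alpha)}(f)\to\mu_i(f)$ for every such $f$. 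Composing this continuous map with the joint continuity supplied by Proposition \ref{P-2.6} (applied with fixed $h$), I conclude that $\mu\mapsto P_\sym(h:\mu_1,\dots,\mu_n)$ is weakly* continuous on $\Prob([-R,R]^n)$.

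Combining the two observations shows that each $\Phi_h$ is weakly* continuous, hence $\cI_\sym=\sup_h\Phi_h$ is weakly* lower semicontinuous. There is no genuine obstacle here: the corollary is essentially a one-line consequence of Proposition \ref{P-2.6} together with the elementary weak* continuity of taking marginals. The only mildly delicate point worth spelling out in the written proof is exactly that marginal-taking step, which is why I would make it explicit rather than leave it implicit.
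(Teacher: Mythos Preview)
Your proposal is correct and follows essentially the same route as the paper: both arguments show that for each fixed $h$ the map $\mu\mapsto\mu(h)-P_\sym(h:\mu_1,\dots,\mu_n)$ is weakly* continuous (using Proposition~\ref{P-2.6} together with the weak* continuity of passing to marginals) and then take the supremum over $h$. The only cosmetic difference is that the paper checks this via sequences while you phrase it as ``a supremum of continuous functions is lower semicontinuous''; your version even spells out the marginal-continuity step a bit more explicitly than the paper does.
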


\begin{proof}
Let $\mu$ and $\mu^{(k)}$, $k\in\bN$, be in $\Prob([-R,R]^n)$ such that
$\mu^{(k)}\to\mu$ weakly*. Let $\mu_i$ and $\mu_i^{(k)}$, $1\le i\le n$, be the
marginals of $\mu$ and $\mu^{(k)}$, respectively. Since $\mu_i^{(k)}\to\mu_i$ weakly*
as $k\to\infty$ for $1\le i\le n$, Proposition \ref{P-2.6} implies that for every
$h\in C_\bR([-R,R]^n)$
\begin{align*}
\mu(h)-P_\sym(h:\mu_1,\dots,\mu_n)
&=\lim_{k\to\infty}\{\mu^{(k)}(h)-P_\sym(h:\mu_1^{(k)},\dots,\mu_n^{(k)})\} \\
&\le\liminf_{k\to\infty}\cI_\sym(\mu^{(k)})
\end{align*}
so that $\cI_\sym(\mu)\le\liminf_{k\to\infty}\cI_\sym(\mu^{(k)})$, as required.
\end{proof}

\section{Relations of $P_\sym(h)$ with $P(h)$ and of $\cI_\sym(\mu)$ with $H(\mu)$}
\setcounter{equation}{0}

First let us recall the Sanov large deviation in the form suitable for our purpose. Let
$h_0\in C_\bR([-R,R])$ and $\mu_0$ be the Gibbs probability measure associated with $h_0$,
i.e.,
$$
d\mu_0(x):={1\over Z_{h_0}}\,e^{h_0(x)}\,dx
\quad\mbox{with}\quad Z_{h_0}:=\int_{[-R,R]}e^{h_0(x)}\,dx.
$$
Consider the infinite product probability space $([-R,R]^\infty,\mu_0^{\otimes\infty})$
and i.i.d.\ (independent and identically distributed) random variables $x_1,x_2,\dots$
consisting of coordinate variables of $[-R,R]^\infty$. The {\it Sanov theorem} (see
\cite[6.2.10]{DZ}) says that the empirical measure (random probability measure)
$$
{\delta_{x_1}+\dots+\delta_{x_N}\over N}
$$
satisfies the large deviation principle in the scale $1/N$ with the good rate function
$S(\mu\,\|\,\mu_0)$ for $\mu\in\Prob([-R,R])$, where $S(\mu\,\|\,\mu_0)$ denotes the
{\it relative entropy} (or the {\it Kullback-Leibler divergence}) of $\mu$ with respect
to $\mu_0$. That is,
\begin{align*}
\limsup_{N\to\infty}{1\over N}\log\mu_0^{\otimes N}
\biggl({\delta_{x_1}+\dots+\delta_{x_N}\over N}\in F\biggr)
&\le-\inf\{S(\mu\,\|\,\mu_0):\mu\in F\}, \\
\liminf_{N\to\infty}{1\over N}\log\mu_0^{\otimes N}
\biggl({\delta_{x_1}+\dots+\delta_{x_N}\over N}\in G\biggr)
&\ge-\inf\{S(\mu\,\|\,\mu_0):\mu\in G\}
\end{align*}
for every closed subset $F$ and every open subset $G$ of $\Prob([-R,R])$ in the weak*
topology. As remarked in \cite[p.\,211]{HP}, it then follows (based on the Borel-Cantelli
lemma) that the empirical measure $(\delta_{x_1}+\dots+\delta_{x_N})/N$ converges to
$\mu_0$ in the weak* topology almost surely. In the next lemma we state some consequences
of the above large deviation, which will play a crucial role in our later discussions.

\begin{lemma}\label{L-3.1}
Let $h_0$ and $\mu_0$ be as above. Then:
\begin{itemize}
\item[(a)] For every $m\in\bN$ and $\delta>0$,
$$
\lim_{N\to\infty}\mu_0^{\otimes N}(\Delta_R(\mu_0;N,m,\delta))=1.
$$
\item[(b)] If $\mu_1\in\Prob([-R,R])$ and $\mu_1\ne\mu_0$, then there exist an
$m\in\bN$ and a $\delta>0$ such that
$$
\limsup_{N\to\infty}{1\over N}\log\mu_0^{\otimes N}(\Delta_R(\mu_1;N,m,\delta))<0.
$$
\end{itemize}
\end{lemma}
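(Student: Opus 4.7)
The plan is to derive both parts from standard large deviation facts for i.i.d.\ samples from $\mu_0$, using the translation between the moment-closeness condition defining $\Delta_R$ and the weak* topology on $\Prob([-R,R])$. For $\bx=(x_1,\dots,x_N)\in[-R,R]^N$, membership in $\Delta_R(\mu;N,m,\delta)$ is equivalent to requiring the empirical measure $\nu_N(\bx):=(\delta_{x_1}+\dots+\delta_{x_N})/N$ to satisfy $|\nu_N(\bx)(x^k)-\mu(x^k)|<\delta$ for all $1\le k\le m$, since $\kappa_N(\bx^k)=\nu_N(\bx)(x^k)$.

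For (a), I would argue directly by the weak law of large numbers. Under $\mu_0^{\otimes N}$ the coordinates are i.i.d.\ $\mu_0$-distributed and each $x_j^k$ is bounded by $R^k$, so Chebyshev's inequality gives $\mu_0^{\otimes N}(|\kappa_N(\bx^k)-\mu_0(x^k)|\ge\delta)\to 0$ as $N\to\infty$ for each fixed $k$; a union bound over $k=1,\dots,m$ then yields $\mu_0^{\otimes N}(\Delta_R(\mu_0;N,m,\delta))\to 1$.

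For (b), I would invoke the Sanov upper bound recalled just before the lemma. Since $\mu_1\ne\mu_0$ and polynomials separate probability measures on $[-R,R]$ (the Hausdorff moment problem being determinate), there exists $k_0\in\bN$ with $\mu_0(x^{k_0})\ne\mu_1(x^{k_0})$. Set $m:=k_0$ and $\delta:=|\mu_0(x^{k_0})-\mu_1(x^{k_0})|/3$, and define the weak*-closed set
$$
F:=\{\nu\in\Prob([-R,R]):|\nu(x^k)-\mu_1(x^k)|\le\delta\ \text{for all}\ 1\le k\le m\}.
$$
Then $\bx\in\Delta_R(\mu_1;N,m,\delta)$ implies $\nu_N(\bx)\in F$, while $\mu_0\notin F$ by construction. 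Since $F$ is weak* compact and $\nu\mapsto S(\nu\,\|\,\mu_0)$ is weak* lower semicontinuous with $\mu_0$ as its unique zero, the infimum $c:=\inf\{S(\nu\,\|\,\mu_0):\nu\in F\}$ is attained at some $\nu^*\ne\mu_0$ and is therefore strictly positive. Applying the Sanov upper bound to the closed set $F$ then gives
$$
\limsup_{N\to\infty}{1\over N}\log\mu_0^{\otimes N}(\Delta_R(\mu_1;N,m,\delta))\le -c<0.
$$

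The main step requiring attention is the separation argument in (b): one must choose $m$ and $\delta$ so that $\Delta_R(\mu_1;N,m,\delta)$ is enclosed in an event $\{\nu_N\in F\}$ with $F$ weak*-closed and bounded away from $\mu_0$. Once this is secured via a single discriminating moment and a sufficiently small $\delta$, the combination of weak* compactness, lower semicontinuity of $S(\cdot\,\|\,\mu_0)$, and uniqueness of its zero at $\mu_0$ forces $c>0$, which converts the Sanov bound into the required exponential decay.
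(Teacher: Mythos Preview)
Your proposal is correct. Part (b) follows essentially the same route as the paper: both select $m$ and $\delta$ so that the weak*-closed moment set $F$ around $\mu_1$ excludes $\mu_0$, then apply the Sanov upper bound together with weak* compactness of $F$ and lower semicontinuity of $S(\cdot\,\|\,\mu_0)$ to force a strictly positive infimum. For part (a), however, you take a more elementary path: the paper deduces the limit from almost sure weak* convergence of the empirical measure to $\mu_0$ (obtained via Sanov and Borel--Cantelli, as remarked just before the lemma), whereas you argue directly from the weak law of large numbers via Chebyshev's inequality and a finite union bound over $k=1,\dots,m$. Your argument for (a) is self-contained and avoids the large deviation machinery entirely; the paper's approach has the minor advantage of treating both parts uniformly within the Sanov framework already set up.
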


\begin{proof}
(a)\enspace For each $m\in\bN$ and $\delta>0$ set
$$
G(\mu_0;m,\delta):=\{\mu\in\Prob([-R,R]):
|\mu(x^k)-\mu_0(x^k)|<\delta,\ 1\le k\le m\},
$$
which is a weak* neighborhood of $\mu_0$. Note that
$\bx=(x_1,\dots,x_N)\in\Delta_R(\mu_0;N,m,\delta)$ is equivalent to
$(\delta_{x_1}+\dots+\delta_{x_N})/N\in G(\mu_0;m,\delta)$. Since 
$(\delta_{x_1}+\dots+\delta_{x_N})/N\to\mu_0$ weakly* in the sense of almost sure
(with respect to $\mu_0^{\otimes\infty}$) as remarked above, we have
$$
\mu_0^{\otimes N}(\Delta_R(\mu_0;N,m,\delta))
=\mu_0^{\otimes N}\biggl({\delta_{x_1}+\dots+\delta_{x_N}\over N}
\in G(\mu_0;m,\delta)\biggr)\longrightarrow1
$$
as $N\to\infty$.

(b)\enspace Let $\mu_1\in\Prob([-R,R])$ with $\mu_1\ne\mu_0$. One can find an $m\in\bN$
and a $\delta>0$ so that the weak* closed subset
$$
F(\mu_1;m,\delta):=\{\mu\in\Prob([-R,R]):
|\mu(t^k)-\mu_1(t^k)|\le\delta,\ 1\le k\le m\}
$$
does not contain $\mu_0$. The large deviation principle implies that
\begin{align*}
&\limsup_{N\to\infty}{1\over N}
\log\mu_0^{\otimes N}(\Delta_R(\mu_1;N,m,\delta)) \\
&\qquad\le\limsup_{N\to\infty}{1\over N}
\log\mu_0^{\otimes N}\biggl({\delta_{x_1}+\dots+\delta_{x_N}\over N}
\in F(\mu_1;m,\delta)\biggr) \\
&\qquad\le-\inf\{S(\mu\,\|\,\mu_0):\mu\in F(\mu_1;m,\delta)\}<0,
\end{align*}
because $S(\mu\,\|\,\mu_0)$ is weakly* lower semicontinuous and so attains the minimum
($>0$) on a weakly* compact subset $F(\mu_1;m,\delta)$.
\end{proof}

The next theorem gives an exact relation between $P_\sym(h)$ and $P(h)$.

\begin{thm}\label{T-3.2}
For every $h\in C_\bR([-R,R]^n)$ and every $\mu_1,\dots,\mu_n\in\Prob([-R,R])$,
\begin{equation}\label{F-3.1}
P(h)\ge P_\sym(h:\mu_1,\dots,\mu_n)+\sum_{i=1}^nH(\mu_i).
\end{equation}
Moreover the following conditions are equivalent:
\begin{itemize}
\item[(i)] $P(h)=P_\sym(h:\mu_1,\dots,\mu_n)+\sum_{i=1}^nH(\mu_i)$;
\item[(ii)] $\mu_1,\dots,\mu_n$ are the marginals of the Gibbs measure associated with
$h$;
\item[(iii)] for each $i=1,\dots,n$, $\mu_i$ is the Gibbs measure associated with
$h_i\in C_\bR([-R,R])$ defined by
$$
h_i(x):=\log\int_{[-R,R]^{n-1}}e^{h(x_1,\dots,x_{i-1},x,x_{i+1},\dots,x_n)}
\,dx_1\cdots dx_{i-1}dx_{i+1}\cdots dx_n
$$
for $x\in[-R,R]$.
\end{itemize}
\end{thm}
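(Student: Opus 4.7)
The plan is to prove \eqref{F-3.1} by a permutation-chamber decomposition of the integral defining $e^{NP(h)}$, and then to derive the equivalence of (i)--(iii) by combining this decomposition with the Sanov-type estimates of Lemma~\ref{L-3.1} for the Gibbs measure $\mu_h$ of $h$, plus a direct Fubini calculation relating $h_i$ to the $i$-th marginal of $\mu_h$.

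For the inequality I begin from
$$e^{NP(h)}=\int_{[-R,R]^{nN}}\exp\bigl(N\kappa_N(h(\bx_1,\dots,\bx_n))\bigr)\,d\bx_1\cdots d\bx_n,$$
obtained by unfolding $Z_h^N$ as an $nN$-fold integral. Using the almost-everywhere decomposition $[-R,R]^N=\bigsqcup_{\tau\in S_N}\tau([-R,R]_\le^N)$ in each factor, the right-hand side equals
$$\sum_{\tau_1,\dots,\tau_n\in S_N}\int_{([-R,R]_\le^N)^n}\exp\bigl(N\kappa_N(h(\tau_1(\tilde\bx_1),\dots,\tau_n(\tilde\bx_n)))\bigr)\,d\tilde\bx_1\cdots d\tilde\bx_n.$$
I then restrict each $\tilde\bx_i$ to the sorted slice $\Delta_R(\mu_i;N,m,\delta)\cap[-R,R]_\le^N$ (of Lebesgue measure $\lambda_N(\Delta_R(\mu_i;N,m,\delta))/N!$) and invoke \cite[Lemma~4.3]{V2} together with uniform continuity of $h$ (first approximating $h$ by a polynomial and then controlling the difference by Proposition~\ref{P-2.3}(3)) to replace $\tilde\bx_i$ by $\xi_i(N)$ inside the exponent at a cost of a factor $e^{N\eps}$. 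This yields
$$e^{NP(h)}\ge e^{-N\eps}\prod_{i=1}^n\lambda_N(\Delta_R(\mu_i;N,m,\delta))\cdot\frac{1}{(N!)^n}\sum_{\tau_1,\dots,\tau_n}\exp\bigl(N\kappa_N(h(\tau_1(\xi_1(N)),\dots,\tau_n(\xi_n(N))))\bigr).$$
Taking $\tfrac1N\log$, then $\limsup_{N\to\infty}$, then letting $m\to\infty$, $\delta\searrow0$, and finally $\eps\searrow0$ gives \eqref{F-3.1}.

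The equivalence (ii)$\Leftrightarrow$(iii) is immediate from the definition of $h_i$: by Fubini the $i$-th marginal density of $\mu_h$ is $e^{h_i(x)}/e^{P(h)}$, which is exactly the Gibbs density of $h_i$. For (iii)$\Rightarrow$(i), I consider the probability measure $d\nu_N(\bx):=(e^{h(\bx)}/e^{NP(h)})\,d\bx$ on $[-R,R]^{nN}$ (equivalent to drawing $N$ i.i.d.\ columns from $\mu_h$); under $\nu_N$ the row $\bx_i$ is $\mu_i^{\otimes N}$-distributed, so Lemma~\ref{L-3.1}(a) and a union bound give $\nu_N\bigl(\bigcap_i\{\bx_i\in\Delta_R(\mu_i;N,m,\delta)\}\bigr)\to1$. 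Running the chamber analysis of Step~1 in the \emph{opposite} direction on this good region (as an upper estimate, again with slack $e^{N\eps}$) then gives $P(h)\le\sum_iH(\mu_i)+P_\sym(h:\mu_1,\dots,\mu_n)+\eps$, matching the inequality already proved and forcing equality. Conversely, for (i)$\Rightarrow$(ii), if $(\mu_h)_i\ne\mu_i$ for some $i$, Lemma~\ref{L-3.1}(b) yields $m,\delta,c>0$ with $\int_{\{\bx_i\in\Delta_R(\mu_i;N,m,\delta)\}}e^{h(\bx)}\,d\bx\le e^{N(P(h)-c)}$; combined with the chamber lower bound of Step~1 applied to the same restricted integral, this forces $P(h)-c\ge\sum_iH(\mu_i)+P_\sym(h:\mu_1,\dots,\mu_n)-\eps$, contradicting (i) once $\eps<c$.

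The main obstacle is carrying the replacement of $\tilde\bx_i$ by $\xi_i(N)$ inside $\kappa_N(h(\tau(\tilde\bx)))$ uniformly in $(\tau_1,\dots,\tau_n)\in S_N^n$ while keeping the various limits $N\to\infty$, $(m,\delta)\to(\infty,0)$, $\eps\searrow0$ in the correct order; both the polynomial approximation of $h$ and Voiculescu's moment-to-$\|\cdot\|_1$ estimate enter here. A secondary subtlety is the two-sided nature of the chamber comparison used in (iii)$\Rightarrow$(i): the upper-bound version costs an extra $e^{N\eps}$ factor in the opposite direction from Step~1, so the two $\eps$-errors must be tracked independently and sent to zero only after Step~1 has already produced the matching lower bound.
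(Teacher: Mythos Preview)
Your proposal is correct and follows essentially the same route as the paper: both arguments rewrite $Z_h^N$ restricted to $\prod_i\Delta_R(\mu_i;N,m,\delta)$ via the permutation-chamber decomposition, use polynomial approximation together with \cite[Lemma~4.3]{V2} to replace the sorted integration variables by $\xi_i(N)$ uniformly in $(\sigma_1,\dots,\sigma_n)$, and then invoke Lemma~\ref{L-3.1}(a) for (ii)$\Rightarrow$(i) and Lemma~\ref{L-3.1}(b) for (i)$\Rightarrow$(ii). The only cosmetic difference is that you phrase (i)$\Rightarrow$(ii) as a contrapositive, whereas the paper argues directly that the chamber lower bound forces $\limsup_{N\to\infty}N^{-1}\log\mu_{h,i}^{\otimes N}(\Delta_R(\mu_i;N,m,\delta))=0$ for all $m,\delta$ and then applies Lemma~\ref{L-3.1}(b); the underlying content is identical.
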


\begin{proof}
Consider the Gibbs probability measure $\mu_h:=Z_h^{-1}e^{h(\bx)}\,d\bx$ associated
with $h$ so that $P(h)=\log Z_h$. Let $N,m\in\bN$ and $\delta>0$. Then it is
straightforward to see that
$$
Z_h^N\mu_h^{\otimes N}\Biggl(\prod_{i=1}^n\Delta_R(\mu_i;N,m,\delta)\Biggr)
=\int_{\prod_{i=1}^n\Delta_R(\mu_i;N,m,\delta)}
\exp\bigl(N\kappa_N(h(\bx_1,\dots,\bx_n))\bigr)\,\prod_{i=1}^nd\bx_i,
$$
where $\prod_{i=1}^n\Delta_R(\mu_i;N,m,\delta)$ in the left-hand side is regarded as a
subset of $(\bR^n)^N$ by the correspondence $(\bx_1,\dots,\bx_n)\leftrightarrow
\bigl((x_{i1})_{i=1}^n,(x_{i2})_{i=1}^n,\dots,(x_{iN})_{i=1}^n\bigr)$
for $\bx_i=(x_{i1},\dots,x_{iN})$. Hence we have
\begin{align}
&Z_h^N\mu_h^{\otimes N}\Biggl(
\prod_{i=1}^n\Delta_R(\mu_i;N,m,\delta)\Biggr) \nonumber\\
&\qquad=(N!)^n\int_{\prod_{i=1}^n\bigl(\Delta_R(\mu_i;N,m,\delta)\cap\bR_\le^N\bigr)}
{1\over(N!)^n} \nonumber\\
&\hskip3cm\times\sum_{\sigma_1,\dots,\sigma_n\in S_N}
\exp\bigl(N\kappa_N(h(\sigma_1(\bx_1),\dots,\sigma_n(\bx_n)))\bigr)
\,\prod_{i=1}^nd\bx_i. \label{F-3.2}
\end{align}
Let $\Xi(N)=(\xi_1(N),\dots,\xi_n(N))$ be an approximating sequence for
$(\mu_1,\dots,\mu_n)$. For any $\eps>0$ there exists a real polynomial $p$ of variables
$x_1,\dots,x_n$ such that $\|p-h\|<\eps$. Then there exist an $m\in\bN$, a $\delta>0$
and an $N_0\in\bN$ such that, for every $N\ge N_0$, if
$\bx_i\in\Delta_R(\mu_i;N,m,\delta)\cap\bR_\le^N$ for $1\le i\le n$, then we have
$$
|\kappa_N(p(\sigma_1(\bx_1),\dots,\sigma_n(\bx_n))
-\kappa_N(p(\sigma_1(\xi_1(N)),\dots,\sigma_n(\xi_n(N))))|<\eps
$$
so that
$$
|\kappa_N(h(\sigma_1(\bx_1),\dots,\sigma_n(\bx_n))
-\kappa_N(h(\sigma_1(\xi_1(N)),\dots,\sigma_n(\xi_n(N))))|<3\eps
$$
for all $\sigma_1,\dots,\sigma_n\in S_N$. Hence by \eqref{F-3.2} we obtain
\begin{align}
Z_h^N&\ge Z_h^N\mu_h^{\otimes N}\Biggl(
\prod_{i=1}^n\Delta_R(\mu_i;N,m,\delta)\Biggr) \nonumber\\
&\ge e^{-3N\eps}{1\over(N!)^n}\sum_{\sigma_1,\dots,\sigma_n\in S_N}\exp
\bigl(N\kappa_N(h(\sigma_1(\xi_1(N)),\dots,\sigma_n(\xi_n(N))))\bigr) \nonumber\\
&\hskip2.5cm\times\prod_{i=1}^n\lambda_N(\Delta_R(\mu_i;N,m,\delta)) \label{F-3.3}
\end{align}
and
\begin{align}
&Z_h^N\mu_h^{\otimes N}\Biggl(
\prod_{i=1}^n\Delta_R(\mu_i;N,m,\delta)\Biggr) \nonumber\\
&\qquad\le e^{3N\eps}{1\over(N!)^n}\sum_{\sigma_1,\dots,\sigma_n\in S_N}\exp
\bigl(N\kappa_N(h(\sigma_1(\xi_1(N)),\dots,\sigma_n(\xi_n(N))))\bigr) \nonumber\\
&\hskip3cm\times\prod_{i=1}^n\lambda_N(\Delta_R(\mu_i;N,m,\delta)). \label{F-3.4}
\end{align}
It follows from \eqref{F-3.3} that
\begin{align*}
P(h)&={1\over N}\log Z_h^N \\
&\ge-3\eps+{1\over N}\log\Biggl[{1\over(N!)^n}
\sum_{\sigma_1,\dots,\sigma_n\in S_N}\exp\bigl(N\kappa_N(h(\sigma_1(\xi_1(N)),
\dots,\sigma_n(\xi_n(N))))\bigr)\Biggr] \\
&\qquad+\sum_{i=1}^n{1\over N}\log\lambda_N(\Delta_R(\mu_i;N,m,\delta)).
\end{align*}
This yields
$$
P(h)\ge-3\eps+P_\sym(h:\mu_1,\dots,\mu_n)
+\sum_{i=1}^n\lim_{N\to\infty}{1\over N}
\log\lambda_N(\Delta_R(\mu_i;N,m,\delta))
$$
thanks to the existence of the limits in the last term. Letting $m\to\infty$ and
$\delta\searrow0$ gives
$$
P(h)\ge-3\eps+P_\sym(h:\mu_1,\dots,\mu_n)+\sum_{i=1}^nH(\mu_i),
$$
which implies inequality \eqref{F-3.1} since $\eps>0$ is arbitrary.

Next let us prove the equivalence of conditions (i)--(iii). For $1\le i\le n$ let
$\mu_{h,i}$ be the $i$th marginal of $\mu_h$. Since
\begin{align}
d\mu_{h,i}(x)
&={1\over Z_h}\Biggl(\int_{[-R,R]^{n-1}}
e^{h(x_1,\dots,x_{i-1},x,x_{i+1}\dots,x_n)}
\,dx_1\cdots dx_{i-1}dx_{i+1}\cdots dx_n\Biggr)\,dx \nonumber\\
&={1\over Z_h}\,e^{h_i(x)}\,dx, \label{F-3.5}
\end{align}
we notice that $\mu_{h,i}$ is the Gibbs measure associated with $h_i$ for $1\le i\le n$.
Hence (ii) $\Leftrightarrow$ (iii) follows. To prove (ii) $\Rightarrow$ (i), assume that
$\mu_i=\mu_{h,i}$ for all $i=1,\dots,n$. Then Lemma \ref{L-3.1}\,(a) gives
\begin{align*}
&\lim_{N\to\infty}\mu_h^{\otimes N}\bigl(\{(\bx_1,\dots,\bx_n)\in([-R,R]^N)^n:
\bx_i\in\Delta_R(\mu_i;N,m,\delta)\}\bigr) \\
&\qquad=\lim_{N\to\infty}\mu_i^{\otimes N}(\Delta_R(\mu_i;N,m,\delta))=1.
\end{align*}
Therefore,
\begin{align*}
&\lim_{N\to\infty}\mu_h^{\otimes N}\Biggl(
\prod_{i=1}^n\Delta_R(\mu_i;N,m,\delta)\Biggr) \\
&\qquad=\lim_{N\to\infty}\mu_h^{\otimes N}\Biggl(
\bigcap_{i=1}^n\{(\bx_1,\dots,\bx_n):\bx_i\in\Delta_R(\mu_i;N,m,\delta)\}
\Biggr)=1.
\end{align*}
Hence it follows from \eqref{F-3.4} that
$$
P(h)\le3\eps+P_\sym(h:\mu_1,\dots,\mu_n)+\sum_{i=1}^nH(\mu_i),
$$
which implies equality in (i).

Conversely, assume (i). Since \eqref{F-3.3} implies
that
\begin{align*}
&P(h)+\limsup_{N\to\infty}{1\over N}\log\mu_h^{\otimes N}
\Biggl(\prod_{i=1}^n\Delta_R(\mu_i;N,m,\delta)\Biggr) \\
&\qquad\ge-3\eps+P_\sym(h:\mu_1,\dots,\mu_n)+\sum_{i=1}^nH(\mu_i),
\end{align*}
we have
$$
\limsup_{N\to\infty}{1\over N}\log\mu_h^{\otimes N}
\Biggl(\prod_{i=1}^n\Delta_R(\mu_i;N,m,\delta)\Biggr)\ge-3\eps.
$$
Here we can take $m$ arbitrarily large and $\delta>0$ arbitrarily small for any given
$\eps>0$. Therefore,
$$
\limsup_{N\to\infty}{1\over N}\log\mu_h^{\otimes N}
\Biggl(\prod_{i=1}^n\Delta_R(\mu_i;N,m,\delta)\Biggr)=0
$$
for all $m\in\bN$ and all $\delta>0$. Since
$$
\mu_h^{\otimes N}\Biggl(\prod_{i=1}^n\Delta_R(\mu_i;N,m,\delta)\Biggr)
\le\mu_{h,i}^{\otimes N}(\Delta_R(\mu_i;N,m,\delta)),
$$
we have
$$
\limsup_{N\to\infty}{1\over N}\log
\mu_{h,i}^{\otimes N}(\Delta_R(\mu_i;N,m,\delta))=0
$$
for all $m\in\bN$, $\delta>0$ and $i=1,\dots,n$. Lemma \ref{L-3.1}\,(b) implies that
$\mu_i=\mu_{h,i}$ for all $i=1,\dots,n$, so (ii) holds.
\end{proof}

\begin{remark}\label{R-3.3}{\rm
Let $\Xi(N)=(\xi_1(N),\dots,\xi_n(N))$ be an approximating sequence for
$(\mu_1,\dots,\mu_n)$. Let $h_1,\dots,h_n\in C_\bR([-R,R])$ and consider $h_i$ as an
element of $C_\bR([-R,R]^n)$ depending on the $i$th variable $x_i$, $1\le i \le n$, so
that $(h_1+\dots+h_n)(\bx)=h_1(x_1)+\dots+h_n(x_n)$ for $\bx=(x_1,\dots,x_n)$. Since
\begin{align*}
&{1\over(N!)^n}\sum_{\sigma_1,\dots,\sigma_n\in S_N}\exp\bigl(
N\kappa_N((h-(h_1+\dots+h_n))(\sigma_1(\xi_1(N)),\dots,\sigma_n(\xi_n(N))))\bigr) \\
&\qquad={1\over(N!)^n}\sum_{\sigma_1,\dots,\sigma_n\in S_N}
\exp\bigl(N\kappa_N(h(\sigma_1(\xi_1(N)),\dots,\sigma_n(\xi_n(N))))\bigr) \\
&\qquad\qquad\times\prod_{i=1}^n\exp\bigl(-N\kappa_N(h_i(\xi_i(N)))\bigr)
\end{align*}
and $\lim_{N\to\infty}\kappa_N(h_i(\xi_i(N)))=\mu_i(h_i)$, it follows that
\begin{align*}
&P_\sym(h-(h_1+\dots+h_n):\mu_1,\dots,\mu_n)+\sum_{i=1}^nP(h_i) \\
&\qquad=P_\sym(h:\mu_1,\dots,\mu_n)+\sum_{i=1}^n(-\mu_i(h_i)+P(h_i)).
\end{align*}
Hence we notice that
\begin{align*}
&P_\sym(h:\mu_1,\dots,\mu_n)+\sum_{i=1}^nH(\mu_i) \\
&\qquad=\inf_{h_1,\dots,h_n}\Biggl\{P_\sym(h-(h_1+\dots+h_n):\mu_1,\dots,\mu_n)
+\sum_{i=1}^nP(h_i)\Biggr\},
\end{align*}
where $h_1+\dots+h_n$ is given as above for $h_1,\dots,h_n\in C_\bR([-R,R])$. In
particular, when $\mu_i$ is the Gibbs measure associated with $h_i$ for $1\le i\le n$, we
have
$$
P_\sym(h:\mu_1,\dots,\mu_n)+\sum_{i=1}^nH(\mu_i)
=P_\sym(h-(h_1+\dots+h_n):\mu_1,\dots,\mu_n)+\sum_{i=1}^nP(h_i).
$$
Hence, if the equivalent conditions (i)--(iii) of Theorem \ref{T-3.2} are satisfied,
then the equality
$$
P(h)=P_\sym(h-(h_1+\dots+h_n):\mu_1,\dots,\mu_n)+\sum_{i=1}^nP(h_i)
$$
holds as well for $h_1,\dots,h_n$ given in (iii).
}\end{remark}

The next lemma is concerned with general relation between $\cI_\sym(\mu)$ and
$I_\sym(\mu)$.

\begin{lemma}\label{L-3.4}
$\cI_\sym(\mu)\le I_\sym(\mu)$ for every $\mu\in\Prob([-R,R]^n)$.
\end{lemma}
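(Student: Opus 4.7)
The plan is to bound $\mu(h)-P_\sym(h:\mu_1,\dots,\mu_n)$ by $I_\sym(\mu)$ for each fixed $h\in C_\bR([-R,R]^n)$, and then take the supremum over $h$. The core idea is to estimate the integral in Definition \ref{D-2.1} from below by restricting the permutations $(\sigma_1,\dots,\sigma_n)$ to the set $\Delta_\sym(\mu:\Xi(N);N,m,\delta)$, on which the exponent $N\kappa_N(h(\sigma_1(\xi_1(N)),\dots,\sigma_n(\xi_n(N))))$ is forced to be close to $N\mu(h)$.

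First I fix $\eps>0$ and pick a real polynomial $p$ in $n$ variables with $\|p-h\|<\eps$. Since $p$ is a finite linear combination of monomials $x_{i_1}\cdots x_{i_k}$, there exist $m\in\bN$ and $\delta>0$ such that, for every $N$, if $(\bx_1,\dots,\bx_n)\in\Delta_R(\mu;N,m,\delta)$ then $|\kappa_N(p(\bx_1,\dots,\bx_n))-\mu(p)|<\eps$. In particular, for any $(\sigma_1,\dots,\sigma_n)\in\Delta_\sym(\mu:\Xi(N);N,m,\delta)$ we have $|\kappa_N(h(\sigma_1(\xi_1(N)),\dots,\sigma_n(\xi_n(N))))-\mu(h)|<3\eps$. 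Restricting the integral defining $P_\sym$ to this set gives
$$
\int_{S_N^n}\exp\bigl(N\kappa_N(h(\sigma_1(\xi_1(N)),\dots,\sigma_n(\xi_n(N))))\bigr)\,d\gamma_{S_N}^{\otimes n}
\ge e^{N(\mu(h)-3\eps)}\,\gamma_{S_N}^{\otimes n}(\Delta_\sym(\mu:\Xi(N);N,m,\delta)).
$$
Applying $\frac1N\log$ and then $\limsup_{N\to\infty}$, and using that $\mu(h)-3\eps$ is a constant that pulls out of the $\limsup$, yields
$$
P_\sym(h:\mu_1,\dots,\mu_n)
\ge \mu(h)-3\eps
+\limsup_{N\to\infty}\frac1N\log\gamma_{S_N}^{\otimes n}(\Delta_\sym(\mu:\Xi(N);N,m,\delta)).
$$

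Now I let $m\to\infty$ and $\delta\searrow0$; by Definition \ref{D-1.1} the last term tends to $-I_\sym(\mu)$, giving $\mu(h)-P_\sym(h:\mu_1,\dots,\mu_n)\le 3\eps+I_\sym(\mu)$. Since $\eps>0$ is arbitrary, $\mu(h)-P_\sym(h:\mu_1,\dots,\mu_n)\le I_\sym(\mu)$, and taking the supremum over $h\in C_\bR([-R,R]^n)$ yields $\cI_\sym(\mu)\le I_\sym(\mu)$. The only non-cosmetic point is the uniform polynomial estimate in step two, but this is immediate from the definition of $\Delta_R(\mu;N,m,\delta)$ applied to the finitely many monomials of $p$; no obstacle of substance should arise, and the argument closely parallels the lower bound (\ref{F-3.3}) used in the proof of Theorem \ref{T-3.2}.
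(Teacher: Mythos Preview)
Your proof is correct and follows essentially the same approach as the paper: both obtain a lower bound on the sum defining $P_\sym$ by restricting to $(\sigma_1,\dots,\sigma_n)\in\Delta_\sym(\mu:\Xi(N);N,m,\delta)$, where the exponent is close to $N\mu(h)$. The only cosmetic difference is that the paper first reduces to polynomials $p$ (using the norm-continuity of $h\mapsto P_\sym(h:\mu_1,\dots,\mu_n)$ from Proposition~\ref{P-2.3}\,(3)) and gets $\mu(p)-\eps$ in the exponent, whereas you keep a general $h$, approximate by $p$ only to control $\kappa_N$, and obtain $\mu(h)-3\eps$; the resulting inequality and the passage to $I_\sym(\mu)$ via the monotonicity of $\Delta_\sym$ in $(m,\delta)$ are identical.
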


\begin{proof}
Let $\mu\in\Prob([-R,R]^n)$ and $\mu_1,\dots,\mu_n$ be the marginals of $\mu$, and
choose an approximating sequence $\Xi(N)=(\xi_1(N),\dots,\xi_n(N))$ for
$(\mu_1,\dots,\mu_n)$. It suffices to prove that
$$
I_\sym(\mu)\ge\mu(p)-P_\sym(p:\mu_1,\dots,\mu_n)
$$
for all real polynomials $p$ of variables $x_1,\dots,x_n$. For any $\eps>0$ there
exist an $m\in\bN$ and a $\delta>0$ such that, for every $N\in\bN$, if
$(\sigma_1,\dots,\sigma_n)\in\Delta_\sym(\mu:\Xi(N);N,m,\delta)$ then
$$
|\kappa_N(p(\sigma_1(\xi_1(N)),\dots,\sigma_n(\xi_n(N))))-\mu(p)|<\eps
$$
so that
$$
e^{N(\mu(p)-\eps)}
<\exp\bigl(N\kappa_N(p(\sigma_1(\xi_1(N)),\dots,\sigma_n(\xi_n(N))))\bigr).
$$
Therefore,
\begin{align*}
&e^{N(\mu(p)-\eps)}{1\over(N!)^n}\,\#\Delta_\sym(\mu:\Xi(N);N,m,\delta) \\
&\qquad\le{1\over(N!)^n}
\sum_{(\sigma_1,\dots,\sigma_n)\in\Delta_\sym(\mu:\Xi(N);N,m,\delta)}
\exp\bigl(N\kappa_N(p(\sigma_1(\xi_1(N)),\dots,\sigma_n(\xi_n(N))))\bigr) \\
&\qquad\le{1\over(N!)^n}\sum_{\sigma_1,\dots,\sigma_n\in S_N}
\exp\bigl(N\kappa_N(p(\sigma_1(\xi_1(N)),\dots,\sigma_n(\xi_n(N))))\bigr),
\end{align*}
which implies that
$$
\mu(p)-\eps-I_\sym(\mu)
\le P_\sym(p:\mu_1,\dots,\mu_n).
$$
This gives the desired inequality since $\eps>0$ is arbitrary.
\end{proof}

The next theorem gives an exact relation between $\cI_\sym(\mu)$ and $H(\mu)$.

\begin{thm}\label{T-3.5}
For every $\mu\in\Prob([-R,R]^n)$ with marginals $\mu_1,\dots,\mu_n\in\Prob([-R,R])$,
$$
H(\mu)=-\cI_\sym(\mu)+\sum_{i=1}^nH(\mu_i).
$$
Moreover, if $H(\mu_i)>-\infty$ for all $i=1,\dots,n$, then
$$
\cI_\sym(\mu)=I_\sym(\mu)=S(\mu,\mu_1\otimes\cdots\otimes\mu_n),
$$
and $\cI_\sym(\mu)=0$ if and only if $\mu=\mu_1\otimes\cdots\otimes\mu_n$, i.e., the
coordinate variables $x_1,\dots,x_n$ are independent with respect to $\mu$.
\end{thm}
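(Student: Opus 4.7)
The plan is to identify $\cI_\sym(\mu)$ by sandwiching it between matching bounds drawn from results already established in the paper.

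For the upper bound, I would invoke Lemma~\ref{L-3.4} ($\cI_\sym(\mu)\le I_\sym(\mu)$) together with Theorem~\ref{T-1.2} ($I_\sym(\mu)=-H(\mu)+\sum_{i=1}^n H(\mu_i)$). For the matching lower bound, I would dualize Theorem~\ref{T-3.2}. Rearranging \eqref{F-3.1} as $P_\sym(h:\mu_1,\dots,\mu_n)\le P(h)-\sum_i H(\mu_i)$, for every $h\in C_\bR([-R,R]^n)$ we get
$$
\mu(h)-P_\sym(h:\mu_1,\dots,\mu_n)\;\ge\;\mu(h)-P(h)+\sum_{i=1}^n H(\mu_i).
$$
Taking the supremum over $h$ and using the Legendre duality $-H(\mu)=\sup_h\{\mu(h)-P(h)\}$ from \eqref{F-1.1}, I obtain $\cI_\sym(\mu)\ge -H(\mu)+\sum_i H(\mu_i)$. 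Combining the two bounds yields the first equality $H(\mu)=-\cI_\sym(\mu)+\sum_i H(\mu_i)$ and simultaneously $\cI_\sym(\mu)=I_\sym(\mu)$.

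For the second half, assuming $H(\mu_i)>-\infty$ for all $i$, I would compute the relative entropy directly. If $\mu$ has joint density $p$ with marginal densities $p_i$, then by Fubini
$$
S(\mu\,\|\,\mu_1\otimes\cdots\otimes\mu_n)=\int p(\bx)\log\frac{p(\bx)}{p_1(x_1)\cdots p_n(x_n)}\,d\bx=-H(\mu)+\sum_{i=1}^n H(\mu_i),
$$
using $\int p(\bx)\log p_i(x_i)\,d\bx=\int p_i(x_i)\log p_i(x_i)\,dx_i$. If instead $\mu$ has no density, then $H(\mu)=-\infty$ and $\mu$ charges some Lebesgue-null set; since each $\mu_i$ has a density, $\mu_1\otimes\cdots\otimes\mu_n$ is absolutely continuous with respect to Lebesgue measure, so $\mu\not\ll\mu_1\otimes\cdots\otimes\mu_n$ and $S(\mu\,\|\,\mu_1\otimes\cdots\otimes\mu_n)=+\infty$, matching the RHS. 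In both cases $\cI_\sym(\mu)=S(\mu\,\|\,\mu_1\otimes\cdots\otimes\mu_n)$. The last ``iff'' then follows from the standard property that the relative entropy vanishes exactly when the two measures coincide.

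I do not anticipate a serious obstacle here, since the heavy lifting has been done in Theorems~\ref{T-1.2} and \ref{T-3.2} and in Lemma~\ref{L-3.4}. The only delicate bookkeeping concerns the first equality when some $H(\mu_i)=-\infty$: by subadditivity $H(\mu)\le\sum_i H(\mu_i)$, so $H(\mu_i)=-\infty$ forces $H(\mu)=-\infty$, and both sides of the stated identity become $-\infty$ consistently, in parallel with the convention already used in Theorem~\ref{T-1.2}.
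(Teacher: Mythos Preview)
Your proposal is correct and follows essentially the same route as the paper: the paper sandwiches via the chain
\[
H(\mu)\ \ge\ -\cI_\sym(\mu)+\sum_iH(\mu_i)\ \ge\ -I_\sym(\mu)+\sum_iH(\mu_i)\ =\ H(\mu),
\]
using \eqref{F-3.1}, \eqref{F-1.1}, Lemma~\ref{L-3.4} and Theorem~\ref{T-1.2}, which is exactly your pair of bounds written on the $H(\mu)$ side rather than the $\cI_\sym(\mu)$ side. The only difference is in the second assertion: the paper simply invokes \cite[Corollary~1.7]{HP1} for $I_\sym(\mu)=S(\mu\,\|\,\mu_1\otimes\cdots\otimes\mu_n)$, whereas you reprove that identity by a direct density computation---a harmless and self-contained substitute.
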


\begin{proof}
By \eqref{F-3.1} and Definition \ref{D-2.4}, for every $h\in C_\bR([-R,R]^n)$ we have
\begin{align}
-\mu(h)+P(h)&\ge-\mu(h)+P_\sym(h:\mu_1,\dots,\mu_n)+\sum_{i=1}^nH(\mu_i) \nonumber\\
&\ge-\cI_\sym(\mu)+\sum_{i=1}^nH(\mu_i). \label{F-3.6}
\end{align}
Hence by \eqref{F-1.1}, Lemma \ref{L-3.4} and Theorem \ref{T-1.2} we have
$$
H(\mu)\ge-\cI_\sym(\mu)+\sum_{i=1}^nH(\mu_i)
\ge-I_\sym(\mu)+\sum_{i=1}^nH(\mu_i)=H(\mu)
$$
so that the first assertion is proved. The second assertion immediately follows from the
first and \cite[Corollary 1.7]{HP1}.
\end{proof}

\begin{prop}\label{P-3.6}
Let $h\in C_\bR([-R,R]^n)$ and $\mu\in\Prob([-R,R]^n)$. Let $\mu_1,\dots,\mu_n$ be the
marginals of $\mu$ and $h_1,\dots,h_n$ be as given in (iii) of Theorem \ref{T-3.2}.
Then the following are equivalent:
\begin{itemize}
\item[(i)] $\mu$ is the Gibbs measure associated with $h$;
\item[(ii)] $\mu$ is mutually equilibrium associated with $h$ and $\mu_i$ is the Gibbs
measure associated with $h_i$ for each $i=1,\dots,n$.
\end{itemize}
\end{prop}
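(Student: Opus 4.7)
The strategy is to link the two conditions through the two key identities already at our disposal: the variational equality $P(h)=\mu_h(h)+H(\mu_h)$ that characterizes the Gibbs measure $\mu_h$ uniquely, and the entropy decomposition $H(\mu)=-\cI_\sym(\mu)+\sum_{i=1}^nH(\mu_i)$ furnished by Theorem \ref{T-3.5}. The bridge between the ``ordinary" and the ``mutual" worlds will be Theorem \ref{T-3.2}, which translates the Gibbs property of $\mu$ (equivalently, the Gibbs property of its marginals $\mu_i$ against $h_i$) into the sharp pressure identity $P(h)=P_\sym(h:\mu_1,\dots,\mu_n)+\sum_{i=1}^nH(\mu_i)$.

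For the direction (i)$\Rightarrow$(ii), I would suppose $\mu=\mu_h$. Then the marginals of $\mu$ are exactly those of $\mu_h$, so Theorem \ref{T-3.2} (with (ii)$\Leftrightarrow$(iii)) gives that each $\mu_i$ is the Gibbs measure associated with $h_i$, and moreover $P(h)=P_\sym(h:\mu_1,\dots,\mu_n)+\sum_{i=1}^nH(\mu_i)$. Combining the classical variational equality $P(h)=\mu(h)+H(\mu)$ (valid because $\mu=\mu_h$) with $H(\mu)=-\cI_\sym(\mu)+\sum_{i=1}^nH(\mu_i)$ from Theorem \ref{T-3.5} yields $\cI_\sym(\mu)=\mu(h)-P_\sym(h:\mu_1,\dots,\mu_n)$, i.e., $\mu$ is mutually equilibrium associated with $h$.

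For (ii)$\Rightarrow$(i), I would run the same chain backwards. The assumption that each $\mu_i$ is Gibbs for $h_i$ lets us invoke Theorem \ref{T-3.2} ((iii)$\Rightarrow$(i)) to extract $P(h)=P_\sym(h:\mu_1,\dots,\mu_n)+\sum_{i=1}^nH(\mu_i)$. Subtracting $\mu(h)$ from both sides and using the mutual equilibrium hypothesis $\mu(h)-P_\sym(h:\mu_1,\dots,\mu_n)=\cI_\sym(\mu)$ gives $P(h)-\mu(h)=-\cI_\sym(\mu)+\sum_{i=1}^nH(\mu_i)$, which by Theorem \ref{T-3.5} is exactly $H(\mu)$. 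Thus $\mu(h)+H(\mu)=P(h)$, so $\mu$ is the (unique) maximizer of $\nu\mapsto\nu(h)+H(\nu)$ over $\Prob([-R,R]^n)$; by the uniqueness recalled at the end of Section 1, $\mu=\mu_h$.

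There is no real obstacle beyond bookkeeping; the only mild point worth checking is that Theorem \ref{T-3.5} is being applied validly. In (i)$\Rightarrow$(ii) the marginals $\mu_i$ of $\mu_h$ have densities bounded above and below by positive constants (since $h$ is continuous on the compact cube), and in (ii)$\Rightarrow$(i) the $\mu_i$ are Gibbs measures of bounded continuous potentials, so in both cases $H(\mu_i)>-\infty$ and the entropy identity of Theorem \ref{T-3.5} makes sense with finite terms.
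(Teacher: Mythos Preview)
Your proposal is correct and follows essentially the same route as the paper: both directions combine Theorem \ref{T-3.5} (the identity $H(\mu)=-\cI_\sym(\mu)+\sum_iH(\mu_i)$), Theorem \ref{T-3.2} (the pressure equality under the Gibbs marginal condition), and the classical variational equality $P(h)=\mu_h(h)+H(\mu_h)$. The only cosmetic difference is that for (i)$\Rightarrow$(ii) the paper reaches the mutual equilibrium equality via a squeeze through the inequality chain \eqref{F-3.6} rather than by invoking the equality case of Theorem \ref{T-3.2} explicitly, but the content is identical.
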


\begin{proof}
(i) $\Rightarrow$ (ii).\enspace
Assume that $\mu$ is the Gibbs measure associated with $h$. By \eqref{F-3.6} and
Theorem \ref{T-3.5},
\begin{align*}
H(\mu)&=-\mu(h)+P(h) \\
&\ge-\mu(h)+P_\sym(h:\mu_1,\dots,\mu_n)+\sum_{i=1}^nH(\mu_i) \\
&\ge-\cI_\sym(\mu)+\sum_{i=1}^nH(\mu_i)=H(\mu).
\end{align*}
Moreover, since $\mu_i$ is the $i$th marginal of $\mu=\mu_h$, it follows as in the
proof of Theorem \ref{T-3.2} (see \eqref{F-3.5}) that $\mu_i$ is the Gibbs measure
associated with $h_i$ for $1\le i\le n$. In particular, $H(\mu_i)>-\infty$ for all
$i=1,\dots,n$. Hence
$$
-\cI_\sym(\mu)=-\mu(h)+P_\sym(h:\mu_1,\dots,\mu_n),
$$
that is, $\mu$ is mutually equilibrium associated with $h$.

(ii) $\Rightarrow$ (i).\enspace
Assume (ii). By Theorems \ref{T-3.5} and \ref{T-3.2},
\begin{align*}
H(\mu)&=-\cI_\sym(\mu)+\sum_{i=1}^nH(\mu_i) \\
&=-\mu(h)+P_\sym(h:\mu_1,\dots,\mu_n)+\sum_{i=1}^nH(\mu_i) \\
&=-\mu(h)+P(h)
\end{align*}
so that (i) follows.
\end{proof}

\section{The discrete case}
\setcounter{equation}{0}

In information theory, random variables mostly take values in a discrete set of alphabets
and the basic quantity is the Shannon entropy rather than the Boltzmann-Gibbs entropy.
So the discrete versions of the preceding results in Sections 2 and 3 are of even more
importance, which are presented in this section.

Let $\cX=\{t_1,\dots,t_d\}$ be a finite set of alphabets and consider the $n$-fold product
$\cX^n$. The {\it Shannon entropy} of a probability measure $\mu\in\Prob(\cX)$ is
$$
S(\mu):=-\sum_{t\in\cX}\mu(t)\log\mu(t).
$$
For each sequence $\bx=(x_1,\dots,x_N)\in\cX^N$, the {\it type} of $\bx$ is a probability
measure on $\cX$ given by
$$
\nu_\bx(t):={N_\bx(t)\over N}
\quad\mbox{where\ \ $N_\bx(t):=\#\{j:x_j=t\}$,\ \ $t\in\cX$}.
$$
For each $\mu\in\Prob(\cX)$ (resp.\ $\mu\in\Prob(\cX^n)$) and for each $N\in\bN$ and
$\delta>0$ we denote by $\Delta(\mu;N,\delta)$ the set of all sequences $\bx\in\cX^N$
(resp.\ $\bx\in(\cX^n)^N$) such that $|\nu_\bx(t)-\mu(t)|<\delta$ for all $t\in\cX$
(resp.\ $t\in \cX^n$), that is, $\Delta(\mu;N,\delta)$ is the set
of all $\delta$-typical sequences (with respect to $\mu$). The Shannon entropy has the
following limiting formula:
\begin{equation}\label{F-4.1}
S(\mu)=\lim_{\delta\searrow0}\lim_{N\to\infty}\log\#\Delta(\mu;N,\delta)
\end{equation}
(see \cite{CT,CS} and also \cite[\S2]{HP1} for a concise exposition).

For $N\in\bN$ let $\cX_\le^N$ denote the set of all sequences of length $N$ of the form
$$
\bx=(t_1,\dots,t_1,t_2,\dots,t_2,\dots,t_d,\dots,t_d)
$$
so that $\cX_\le^N$ is regarded as the set of all types from $\cX^N$. The action of $S_N$
on $\cX^N$ is similar to that on $\bR^N$ given in Definition \ref{D-1.1}.

\begin{definition}\label{D-4.1}{\rm
Let $\mu\in\Prob(\cX^n)$ and $\mu_i\in\Prob(\cX)$ be the $i$th marginal of $\mu$ for
$1\le i\le n$. Choose an approximating sequence $\Xi(N)=(\xi_1(N),\dots,\xi_n(N))$,
$N\in\bN$, for $(\mu_1,\dots,\mu_n)$, that is, $\xi_i(N)\in\cX_\le^N$ and
$\nu_{\xi_i(N)}(t)\to\mu_i(t)$ as $N\to\infty$ for all $t\in\cX$ and $i=1,\dots,n$.
For each $N\in\bN$ and $\delta>0$ we define $\Delta_\sym(\mu:\Xi(N);N,\delta)$ to be the
set of all $(\sigma_1,\dots,\sigma_n)\in S_N^n$ such that
$$
(\sigma_1(\xi_1(N)),\dots,\sigma_n(\xi_n(N)))\in\Delta(\mu;N,\delta).
$$
We define
$$
I_\sym(\mu):=-\lim_{\delta\searrow0}\limsup_{N\to\infty}{1\over N}
\log\gamma_{S_N}^{\otimes n}(\Delta_\sym(\mu:\Xi(N);N,\delta))
$$
and $\overline I_\sym(\mu)$ by replacing $\limsup$ by $\liminf$. See \cite[Lemma 2.4]{HP1}
for the independence of the choice of $\Xi(N)$ for $I_\sym(\mu)$ and
$\overline I_\sym(\mu)$ as well as their equivalent definitions.
}\end{definition}

The two quantities $I_\sym(\mu)$ and $\overline I_\sym(\mu)$ are equal and connected to
$S(\mu)$ as follows.

\begin{thm}\label{T-4.2}{\rm(\cite[Theorem 2.5]{HP1})}\quad
For every $\mu\in\Prob(\cX^n)$ with marginals $\mu_1,\dots,\mu_n\in\Prob(\cX)$,
$$
I_\sym(\mu)=\overline I_\sym(\mu)=-S(\mu)+\sum_{i=1}^nS(\mu_i).
$$
\end{thm}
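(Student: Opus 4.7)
My plan is to compute $\gamma_{S_N}^{\otimes n}(\Delta_\sym(\mu:\Xi(N);N,\delta))$ explicitly by the method of types and then take the limits $N\to\infty$ and $\delta\searrow0$. The starting observation is that when $\sigma_i$ is uniformly distributed on $S_N$, the sequence $\sigma_i(\xi_i(N))$ is uniformly distributed on the set of all $\bx\in\cX^N$ of type $\nu_{\xi_i(N)}$, and independence across $i$ means the $n$-tuple of rows is uniform on the product of these type classes.

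The main step is to stratify the event
$(\sigma_1(\xi_1(N)),\dots,\sigma_n(\xi_n(N)))\in\Delta(\mu;N,\delta)$
according to the joint type $\tau$ on $\cX^n$ of the resulting $N$-sequence, equivalently the column type of the $n\times N$ matrix whose rows are the $\sigma_i(\xi_i(N))$. Only types $\tau$ whose $i$th marginal equals $\nu_{\xi_i(N)}$ can occur, and for any such $\tau$ a direct multinomial count yields
$$
\gamma_{S_N}^{\otimes n}\bigl(\{\hbox{column type}=\tau\}\bigr)
={N!\bigm/\prod_{t\in\cX^n}(N\tau(t))!\over
\prod_{i=1}^n\bigl(N!\bigm/\prod_{t\in\cX}(N\nu_{\xi_i(N)}(t))!\bigr)}.
$$
Applying Stirling's formula term by term produces an exact cancellation of the $\log N$ divergences and leaves
$$
{1\over N}\log\gamma_{S_N}^{\otimes n}\bigl(\{\hbox{column type}=\tau\}\bigr)
=S(\tau)-\sum_{i=1}^nS(\nu_{\xi_i(N)})+o(1).
$$

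Since the number of distinct types on $\cX^n$ is only polynomial in $N$, summing over those $\tau$ that are $\delta$-close to $\mu$ (and carry the prescribed marginals) is controlled to leading exponential order by the maximum. Letting $N\to\infty$ and using $\nu_{\xi_i(N)}\to\mu_i$ together with continuity of $S$ on the finite simplex gives $S(\nu_{\xi_i(N)})\to S(\mu_i)$; letting $\delta\searrow0$, the same continuity shows the maximum of $S(\tau)$ over $\delta$-close compatible types tends to $S(\mu)$. Consequently
$$
\lim_{\delta\searrow0}\lim_{N\to\infty}{1\over N}
\log\gamma_{S_N}^{\otimes n}(\Delta_\sym(\mu:\Xi(N);N,\delta))
=S(\mu)-\sum_{i=1}^nS(\mu_i).
$$
Negating yields $I_\sym(\mu)=\overline I_\sym(\mu)=-S(\mu)+\sum_{i=1}^nS(\mu_i)$, the equality of the two symmetric mutual informations following from the fact that the right-hand side is a genuine limit rather than only a $\limsup$ or $\liminf$.

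I expect the main obstacle to be reconciling two constraints on $\tau$: exact marginal equality $\tau_i=\nu_{\xi_i(N)}$ and approximate proximity $|\tau-\mu|<\delta$. For $N$ large $\nu_{\xi_i(N)}$ is arbitrarily close to $\mu_i$, and a short argument is needed to verify that enough types near $\mu$ have compatible marginals so the sum still realizes the maximum $S(\mu)$ in the limit. A secondary technical point concerns boundary cases $\mu_i(t)=0$, where $\nu_{\xi_i(N)}(t)$ may vanish and the Stirling expansion of $(N\nu_{\xi_i(N)}(t))!$ requires separate treatment; the convention $0\log0=0$ and continuity of $S$ on the closed simplex make the asymptotics go through unchanged.
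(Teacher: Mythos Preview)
The paper does not prove Theorem~\ref{T-4.2}; it is quoted verbatim from \cite[Theorem~2.5]{HP1} and used as input for the later results in Section~4. Consequently there is no proof in the present paper to compare your proposal against.

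That said, your method-of-types argument is the natural and correct approach in this finite-alphabet setting, and is very likely close in spirit to what \cite{HP1} does. Your multinomial count for $\gamma_{S_N}^{\otimes n}(\{\text{column type}=\tau\})$ is correct, the Stirling computation is standard, and the reduction to a maximum over polynomially many types is exactly the Csisz\'ar--K\"orner type-counting philosophy that underlies the formula \eqref{F-4.1} already invoked in the paper. The limits in $N$ and $\delta$ go through by continuity of $S$ on the closed simplex, as you indicate.

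You have correctly identified the only genuine issue: for the lower bound one must exhibit, for each large $N$, a joint type $\tau$ on $\cX^n$ with denominator $N$ that is $\delta$-close to $\mu$ and has \emph{exact} marginals $\nu_{\xi_i(N)}$. This is not entirely automatic, since closeness of $\tau$ to $\mu$ only forces the marginals of $\tau$ to be close to $\mu_i$, not equal to the specific types $\nu_{\xi_i(N)}$. A clean way to close this gap is a coupling/rounding construction: start from a rational approximation $\tau_0$ of $\mu$ with denominator $N$, then for each $i$ transport the small discrepancy $\nu_{\xi_i(N)}-(\tau_0)_i$ (which is $O(1/N)+o(1)$ in total variation) along a single coordinate while keeping the others fixed; iterating over $i=1,\dots,n$ adjusts all marginals exactly at total-variation cost $o(1)$, so the resulting $\tau$ stays within $\delta$ of $\mu$ for $N$ large. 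With this lemma in hand your argument is complete, including the boundary cases $\mu_i(t)=0$, which are absorbed by the uniform $O((\log N)/N)$ error in the Stirling expansion and the convention $0\log0=0$.
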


We denote by $C_\bR(\cX^n)$ the real Banach space of real functions on $\cX^n$ with the
norm $\|f\|:=\max\{|f(\bx)|:\bx\in\cX^n\}$.

\begin{definition}\label{D-4.3}{\rm
Let $\mu_1,\dots,\mu_n\in\Prob(\cX)$ and choose an approximating sequence
$\Xi(N)=(\xi_1(N),\dots,\xi_n(N))$ for $(\mu_1,\dots,\mu_n)$ as given in Definition
\ref{D-4.1}. For each $h\in C_\bR(\cX^n)$ and $\bx_i\in\cX^N$, $1\le i\le n$, define
$h(\bx_1,\dots,\bx_n)$ and $\kappa_N(h(\bx_1,\dots,\bx_n))$ in the same manner as in
\eqref{F-2.1} and \eqref{F-2.2} so that
$$
\kappa_N(h(\bx_1,\dots,\bx_n))=\sum_{t\in\cX^n}h(t)\nu_{(\bx_1,\dots,\bx_n)}(t)
$$
for $(\bx_1,\dots,\bx_n)$ regarded as a sequence in $(\cX^n)^N$. We define the
{\it mutual pressure} of $h$ with respect to $(\mu_1,\dots,\mu_n)$ to be
\begin{align*}
&P_\sym(h:\mu_1,\dots,\mu_n) \\
&\quad:=\limsup_{N\to\infty}{1\over N}\log
\Biggl[{1\over(N!)^n}\sum_{\sigma_1,\dots,\sigma_n\in S_N}
\exp\bigl(N\kappa_N(h(\sigma_1(\xi_1(N)),\dots,\sigma_n(\xi_n(N))))\bigr)\Biggr].
\end{align*}
Moreover, for each $\mu\in\Prob(\cX^n)$ with marginals $\mu_1,\dots,\mu_n\in\Prob(\cX)$
we define
$$
\cI_\sym(\mu):=\sup\{\mu(h)-P_\sym(h:\mu_1,\dots,\mu_n):h\in C_\bR(\cX^n)\},
$$
and we say that $\mu$ is {\it mutually equilibrium} associated with $h$ if the equality
$$
\cI_\sym(\mu)=\mu(h)-P_\sym(h:\mu_1,\dots,\mu_n)
$$
holds.
}\end{definition}

Then all the results in Section 2 are valid in this discrete setting as well. To see this,
it is convenient to reduce the discrete case to a special case of the continuous case of
Section 2 in the following way. Choose $d$ points $\hat t_1<\hat t_2<\dots<\hat t_d$ in
$[-R,R]$ corresponding to $t_1,t_2,\dots,t_d$ in $\cX$. For each $\mu\in\Prob(\cX^n)$ with
marginals $\mu_1,\dots,\mu_n$ we have the corresponding (atomic) probability measure
$\hat\mu\in\Prob([-R,R]^n)$ given by
$$
\hat\mu:=\sum_{\bx\in\cX^n}\mu(\bx)\delta_{\hat\bx},
$$
and similarly $\hat\mu_1,\dots,\hat\mu_n\in\Prob([-R,R])$, $1\le i\le n$. Then the
marginals of $\hat\mu$ are $\hat\mu_1,\dots,\hat\mu_n$. For each approximating sequence
$(\xi_1(N),\dots,\xi_n(N))$ for $(\mu_1,\dots,\mu_n)$ we have the corresponding
$\hat\xi_i(N)\in[-R,R]_\le^N$, $1\le i\le n$. Since
$$
\kappa_N(\hat\xi_i(N)^k)=\sum_{t\in\cX}\hat t^k\nu_{\xi_i(N)}(t)
\longrightarrow\sum_{t\in\cX}\hat t^k\mu_i(t)=\int x^k\,d\hat\mu_i(x)
\quad\mbox{as $N\to\infty$}
$$
for all $k\in\bN$, it follows that $(\hat\xi_1(N),\dots,\hat\xi_n(N))$ is an approximating
sequence for $(\hat\mu_1,\dots,\hat\mu_n)$. For each $h\in C_\bR(\cX^n)$ choose an
$\hat h\in C_\bR([-R,R]^n)$ such that $\hat h(\hat\bx)=h(\bx)$ for all $\bx\in\cX^n$. Then
we notice that $P_\sym(h:\mu_1,\dots,\mu_n)$ in Definition \ref{D-4.3} is equal to
$P_\sym(\hat h:\hat\mu_1,\dots,\hat\mu_n)$ defined in Definition \ref{D-2.1}, and that
$\cI_\sym(\mu)$ in Definition \ref{D-4.3} is equal to $\cI_\sym(\hat\mu)$ defined in
Definition \ref{D-2.4}. Upon these considerations it is rather straightforward to show
the discrete versions of the results in Section 2. For example, for $h,h'\in C_\bR(\cX^n)$
choose $\hat h,\hat g\in C_\bR([-R,R]^n)$ such that $\hat h|_{\cX^n}=h$,
$\|\hat h\|=\|h\|$, $\hat g|_{\cX^n}=h-h'$ and $\|\hat g\|=\|h-h'\|$, and define
$\hat h':=\hat h-\hat g$. Then $\hat h'|_{\cX^n}=h'$ and $\|\hat h-\hat h'\|=\|h-h'\|$.
Hence the discrete version of Proposition \ref{P-2.3}\,(3) is seen as follows:
\begin{align*}
&|P_\sym(h:\mu_1,\dots,\mu_n)-P_\sym(h':\mu_1,\dots,\mu_n)| \\
&\qquad=|P_\sym(\hat h:\hat\mu_1,\dots,\hat\mu_n)
-P_\sym(\hat h':\hat\mu_1,\dots,\hat\mu_n)| \\
&\qquad\le\|\hat h-\hat h'\|=\|h-h'\|.
\end{align*}
Also, the discrete version of Proposition \ref{P-2.5} is seen as follows:
\begin{align*}
P_\sym(h:\mu_1,\dots,\mu_n)
&=P_\sym(\hat h:\hat\mu_1,\dots,\hat\mu_n) \\
&=\max\{\lambda(\hat h)-\cI_\sym(\lambda):
\lambda\in\Prob_{\hat\mu_1,\dots,\hat\mu_n}([-R,R]^n)\} \\
&=\max\{\mu(h)-\cI_\sym(\mu):\mu\in\Prob_{\mu_1,\dots,\mu_n}(\cX^n)\}
\end{align*}
since $\Prob_{\hat\mu_1,\dots,\hat\mu_n}([-R,R]^n)
=\{\hat\mu:\mu\in\Prob_{\mu_1,\dots,\mu_n}(\cX^n)\}$.

Now let us show the discrete version of Theorem \ref{T-3.2}. Although the proof is
essentially same as that of Theorem \ref{T-3.2}, some non-trivial modifications are
necessary due to the difference between the Shannon and Boltzmann-Gibbs entropies.

\begin{thm}\label{T-4.4}
For every $h\in C_\bR(\cX^n)$ and every $\mu_1,\dots,\mu_n\in\Prob(\cX)$,
\begin{equation}\label{F-4.2}
P(h)\ge P_\sym(h:\mu_1,\dots,\mu_n)+\sum_{i=1}^nS(\mu_i),
\end{equation}
and the following conditions are equivalent:
\begin{itemize}
\item[(i)] $P(h)=P_\sym(h:\mu_1,\dots,\mu_n)+\sum_{i=1}^nS(\mu_i)$;
\item[(ii)] $\mu_1,\dots,\mu_n$ are the marginals of the Gibbs measure $\mu_h$ associated
with $h$ given by
\begin{equation}\label{F-4.3}
\mu_h(\bx):={1\over Z_h}\,e^{h(\bx)},\quad\bx\in\cX^n
\quad\mbox{with}\quad Z_h:=\sum_{\bx\in\cX^n}e^{h(\bx)};
\end{equation}
\item[(iii)] for each $i=1,\dots,n$, $\mu_i$ is the Gibbs measure associated with
$h_i\in C_\bR(\cX)$ defined by
$$
h_i(x):=\log\sum_{x_1,\dots,x_{i-1},x_{i+1},\dots,x_n\in\cX}
e^{h(x_1,\dots,x_{i-1},x,x_{i+1},\dots,x_n)}
$$
for $x\in\cX$.
\end{itemize}
\end{thm}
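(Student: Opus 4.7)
The plan is to follow the proof of Theorem \ref{T-3.2} line by line, with the Lebesgue integral on $\bR^N$ replaced by counting on $\cX^N$ and the $S_N$-orbit structure adjusted to the discrete setting. The essential structural change is the orbit-sum identity: for any $\xi\in\cX^N$ the stabilizer of $\xi$ under $S_N$ has size $\prod_tN_\xi(t)!$, so
\[
\sum_{\bx\in\cX^N:\,\nu_\bx=\nu_\xi}f(\bx)
=\frac{1}{\prod_tN_\xi(t)!}\sum_{\sigma\in S_N}f(\sigma(\xi))
\]
for any $f$. Since $\nu_{\xi_i(N)}\to\mu_i$ and Shannon entropy is continuous on the finite-dimensional simplex $\Prob(\cX)$, Stirling gives
\[
\frac{N!}{\prod_tN_{\xi_i(N)}(t)!}=\exp\bigl(NS(\mu_i)+o(N)\bigr).
\]
It is through this identity that the Shannon entropies $\sum_iS(\mu_i)$ enter the proof, playing the role that $\sum_i(1/N)\log\lambda_N(\Delta_R(\mu_i;N,m,\delta))\to\sum_iH(\mu_i)$ played in the continuous case.

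To prove \eqref{F-4.2}, I would set $A_N:=\{(\bx_1,\dots,\bx_n)\in(\cX^N)^n:\nu_{\bx_i}=\nu_{\xi_i(N)},\ 1\le i\le n\}$ and observe
\[
Z_h^N\mu_h^{\otimes N}(A_N)
=\prod_{i=1}^n\frac{1}{\prod_tN_{\xi_i(N)}(t)!}
\sum_{\sigma_1,\dots,\sigma_n\in S_N}
\exp\bigl(N\kappa_N(h(\sigma_1(\xi_1(N)),\dots,\sigma_n(\xi_n(N))))\bigr),
\]
which, combined with $\mu_h^{\otimes N}(A_N)\le1$, the Stirling estimate above, and $\limsup_N(1/N)\log$, yields \eqref{F-4.2}. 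The equivalence (ii)$\Leftrightarrow$(iii) follows exactly as in the continuous case: a direct calculation gives $\mu_{h,i}(x)=e^{h_i(x)}/Z_h$ and $\sum_xe^{h_i(x)}=Z_h$, so $\mu_{h,i}$ is the Gibbs measure of $h_i$.

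For (ii)$\Rightarrow$(i), I would enlarge $A_N$ to $B_N:=\prod_i\Delta(\mu_i;N,\delta)$, decompose $B_N$ by the types $\nu^{(i)}$ of its $i$th coordinate sequences, and for each type pick a canonical representative $\xi^{(i)}\in\cX_\le^N$. Two ordered sequences in $\cX_\le^N$ whose types lie in a $\delta$-neighborhood of $\mu_i$ agree in all but $O(Nd\delta)$ positions, so $\kappa_N(h(\sigma_1(\xi^{(1)}),\dots,\sigma_n(\xi^{(n)})))$ differs from $\kappa_N(h(\sigma_1(\xi_1(N)),\dots,\sigma_n(\xi_n(N))))$ by at most $O(n\|h\|d\delta)$ for every $(\sigma_1,\dots,\sigma_n)$. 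Combined with the continuity of $S$, the bound $(N+1)^{dn}$ on the number of types, and the law of large numbers $\mu_h^{\otimes N}(B_N)\to1$ (valid under (ii), since the coordinates of $\bx_i$ under $\mu_h^{\otimes N}$ are i.i.d.\ with the $i$th marginal $(\mu_h)_i=\mu_i$), this gives the matching upper bound $P(h)\le\sum_iS(\mu_i)+P_\sym(h:\mu_1,\dots,\mu_n)$. Finally, for (i)$\Rightarrow$(ii), the lower bound in the previous paragraph gives $P(h)+\limsup_N(1/N)\log\mu_h^{\otimes N}(A_N)\ge\sum_iS(\mu_i)+P_\sym$, so under (i) one must have $\limsup_N(1/N)\log\mu_h^{\otimes N}(A_N)=0$; bounding $\mu_h^{\otimes N}(A_N)\le\mu_{h,i}^{\otimes N}(\Delta(\mu_i;N,\delta))$ by projection and invoking the finite-alphabet Sanov upper bound (the elementary discrete analog of Lemma \ref{L-3.1}(b)) then forces $\mu_i=\mu_{h,i}$ for each $i$. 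The principal technical obstacle is controlling the Stirling-multinomial asymptotics uniformly over types in a $\delta$-neighborhood of $\mu_i$; this is handled by the continuity of $S$ on the finite-dimensional simplex together with the fact that the number of types is only polynomial in $N$, so the type decomposition contributes only $o(N)$ to the log-exponent.
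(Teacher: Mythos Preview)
Your proposal is correct and follows the paper's overall strategy: the orbit/coset decomposition of $\cX^N$ under $S_N$, Stirling asymptotics for the multinomial coefficients, and the discrete Sanov bound for the characterization of equality. The one noteworthy difference is that for inequality \eqref{F-4.2} and for the direction (i)$\Rightarrow$(ii) you work with the \emph{single} exact type class $A_N=\{(\bx_1,\dots,\bx_n):\nu_{\bx_i}=\nu_{\xi_i(N)}\}$, which yields an exact identity between $Z_h^N\mu_h^{\otimes N}(A_N)$ and the (unnormalized) permutation sum; the paper instead works throughout with the full $\delta$-neighborhood $\prod_i\Delta(\mu_i;N,\delta)$ and must therefore control the variation of both $\kappa_N(h)$ and the multinomial coefficients uniformly over all types in that neighborhood (this is the content of \eqref{F-4.4}--\eqref{F-4.10}). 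Your shortcut buys a cleaner argument for those two pieces and eliminates several layers of $\eps$. For (ii)$\Rightarrow$(i) both proofs necessarily pass to the $\delta$-neighborhood, since $\mu_h^{\otimes N}(A_N)$ typically decays exponentially whereas $\mu_h^{\otimes N}\bigl(\prod_i\Delta(\mu_i;N,\delta)\bigr)\to1$ under (ii); here your argument and the paper's coincide.
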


\begin{proof}
Let $\mu_h$ be the Gibbs measure given in \eqref{F-4.3}, and let
$(\xi_1(N),\dots,\xi_n(N))$ be an approximating sequence for $(\mu_1,\dots,\mu_n)$.
For any $\eps>0$ one can choose a $\delta>0$ such that for every $i=1,\dots,n$ and every
$p\in\Prob(\cX)$, if $|p(t)-\mu_i(t)|<\delta$ for all $t\in\cX$, then
$|S(p)-S(\mu_i)|<\eps/n$. This means that for each $N\in\bN$ and $i=1,\dots,n$, one has
$|S(\nu_\bx)-S(\mu_i)|<\eps/n$ whenever $\bx\in\Delta(\mu_i;N,\delta)$. Furthermore, when
$\delta>0$ is small enough, one can find an $N_0\in\bN$ such that, for every $N\ge N_0$,
if $\bx_i\in\Delta(\mu_i;N,\delta)\cap\cX_\le^N$ for $1\le i\le n$, then
\begin{equation}\label{F-4.4}
|\kappa_N(h(\sigma_1(\bx_1),\dots,\sigma_n(\bx_n))
-\kappa_N(h(\sigma_1(\xi_1(N)),\dots,\sigma_n(\xi_n(N))))|<\eps
\end{equation}
for all $(\sigma_1,\dots,\sigma_n)\in S_N$.

For each sequence $(N_1,\dots,N_d)$ of integers $N_l\ge0$ with $\sum_{l=1}^dN_l=N$, let
$S(N_1,\dots,N_d)$ denote the subgroups of $S_N$ consisting of products of permutations of
$\{1,\dots,N_1\}$, $\{N_1+1,\dots,N_1+N_2\}$, $\dots$, $\{N_1+\dots+N_{d-1}+1,\dots,N\}$,
and let $S_N/S(N_1,\dots,N_d)$ be the set of left cosets of $S(N_1,\dots,N_d)$. For each
$\bx\in\cX_\le^N$ we write $S_\bx$ for $S(N_\bx(t_1),\dots,N_\bx(t_d))$. For $N\in\bN$ it
then follows that
\begin{align}
&Z_h^N\mu_h^{\otimes n}\Biggl(\prod_{i=1}^n\Delta(\mu_i;N,\delta)\Biggr) \nonumber\\
&\qquad=\sum_{(\bx_1,\dots,\bx_n)\in\prod_{i=1}^n\Delta(\mu_i;N,\delta)}
\exp\bigl(N\kappa_N(h(\bx_1,\dots,\bx_n))\bigr) \nonumber\\
&\qquad=\sum_{(\bx_1,\dots,\bx_n)\in\prod_{i=1}^n
\bigl(\Delta(\mu_i;N,\delta)\cap\cX_\le^N\bigr)} \nonumber\\
&\hskip3cm\sum_{([\sigma_1],\dots,[\sigma_n])\in(S_N/S_{\bx_1},\dots,S_N/S_{\bx_n})}
\exp\bigl(N\kappa_N(h(\sigma_1(\bx_1),\dots,\sigma_n(\bx_n)))\bigr), \label{F-4.5}
\end{align}
where $\prod_{i=1}^n\Delta(\mu_i;N,\delta)$ in the left-hand side is regarded as a subset
of $(\cX^n)^N$ in the same manner as in the beginning of the proof of Theorem \ref{T-3.2},
and $[\sigma_i]$ denotes the coset of $S_{\bx_i}$ containing $\sigma_i$. Moreover we have
\begin{align}
&\sum_{\sigma_1,\dots,\sigma_n\in S_N}
\exp\bigl(N\kappa_N(h(\sigma_1(\bx_1),\dots,\sigma_n(\bx_n)))\bigr) \nonumber\\
&\qquad=\sum_{([\sigma_1],\dots,[\sigma_n])\in(S_N/S_{\bx_1},\dots,S_N/S_{\bx_n})}
\Biggl(\prod_{i=1}^n\prod_{l=1}^dN_{\bx_i}(t_l)!\Biggr)
\exp\bigl(N\kappa_N(h(\sigma_1(\bx_1),\dots,\sigma_n(\bx_n)))\bigr). \label{F-4.6}
\end{align}
For each $i=1,\dots,n$ and for any $\bx\in\cX^N$, the Stirling formula implies that
\begin{align*}
&{1\over N}\sum_{l=1}^d\log N_\bx(t_l)!-{1\over N}\log N! \\
&\qquad={1\over N}\sum_{l=1}^d
\biggl(N_\bx(t_l)\log N_\bx(t_l)-N_\bx(t_l)+{1\over2}\log N_\bx(t_l)+O(1)\biggr) \\
&\qquad\qquad-{1\over N}\biggl(N\log N-N+{1\over2}\log N+O(1)\biggr) \\
&\qquad=\sum_{l=1}^d{N_\bx(t_l)\over N}\log N_\bx(t_l)-\log N+o(1) \\
&\qquad=-S(\nu_\bx)+o(1)\quad\mbox{as $N\to\infty$},
\end{align*}
where $o(1)$ as $N\to\infty$ is uniform for $\bx\in\cX^N$. Thanks to the above choice of
$\delta>0$, for every $(\bx_1,\dots,\bx_n)\in\prod_{i=1}^n\Delta(\mu_i;N,\delta)$ we have
\begin{align}
&\exp\Biggl[N\Biggl(-\sum_{i=1}^nS(\mu_i)-\eps+o(1)\Biggr)\Biggr] \nonumber\\
&\qquad\le{\prod_{i=1}^n\prod_{l=1}^dN_{\bx_i}(t_i)\over(N!)^n}
\le\exp\Biggl[N\Biggl(-\sum_{i=1}^nS(\mu_i)+\eps+o(1)\Biggr)\Biggr]
\quad\mbox{as $N\to\infty$}, \label{F-4.7}
\end{align}
where $o(1)$ is uniform for $(\bx_1,\dots,\bx_n)\in\prod_{i=1}^n\Delta(\mu_i;N,\delta)$.

Combining \eqref{F-4.5}--\eqref{F-4.7} yields
\begin{align*}
&Z_h^N\mu_h^{\otimes N}\Biggl(\prod_{i=1}^n\Delta(\mu_i;N,\delta)\Biggr) \\
&\quad\ge\sum_{(\bx_1,\dots,\bx_n)\in\prod_{i=1}^n
\bigl(\Delta(\mu_i;N,\delta)\cap\cX_\le^N\bigr)}
{1\over(N!)^n}\sum_{\sigma_1,\dots,\sigma_n\in S_N}
\exp\bigl(N\kappa_N(h(\sigma_1(\bx_1),\dots,\sigma_n(\bx_n)))\bigr) \\
&\hskip8cm\times\exp\Biggl[N\Biggl(\sum_{i=1}^nS(\mu_i)-\eps+o(1)\Biggr)\Biggr]
\end{align*}
and the reverse inequality with $+\eps$ in place of $-\eps$ in the last term. By this
together with \eqref{F-4.4} we obtain
\begin{align}
Z_n^N
&\ge Z_h^N\mu_h^{\otimes N}\Biggl(\prod_{i=1}^n\Delta(\mu_i;N,\delta)\Biggr) \nonumber\\
&\ge e^{-2N\eps}{1\over(N!)^n}\sum_{\sigma_1\dots,\sigma_n\in S_N}
\exp\bigl(N\kappa_N(h(\sigma_1(\xi_1(N)),\dots,\sigma_n(\xi_n(N))))\bigr) \nonumber\\
&\hskip3cm\times\prod_{i=1}^n\#\bigl(\Delta(\mu_i;N,\delta)\cap\cX_\le^N\bigr)
\cdot\exp\Biggl[N\Biggl(\sum_{i=1}^nS(\mu_i)+o(1)\Biggr)\Biggr] \label{F-4.8}
\end{align}
and
\begin{align}
&Z_h^N\mu_h^{\otimes N}\Biggl(\prod_{i=1}^n\Delta(\mu_i;N,\delta)\Biggr) \nonumber\\
&\qquad\le e^{2N\eps}{1\over(N!)^n}\sum_{\sigma_1\dots,\sigma_n\in S_N}
\exp\bigl(N\kappa_N(h(\sigma_1(\xi_1(N)),\dots,\sigma_n(\xi_n(N))))\bigr) \nonumber\\
&\hskip3.5cm\times\prod_{i=1}^n\#\bigl(\Delta(\mu_i;N,\delta)\cap\cX_\le^N\bigr)
\cdot\exp\Biggl[N\Biggl(\sum_{i=1}^nS(\mu_i)+o(1)\Biggr)\Biggr] \label{F-4.9}
\end{align}
for all $N\ge N_0$. Furthermore, since
$$
\Delta(\mu_i;N,\delta)=\bigl\{\sigma(\bx):
\bx\in\Delta(\mu_i;N,\delta)\cap\cX_\le^N,\,[\sigma]\in S_N/S_{\bx}\bigr\}
$$
so that
$$
\#\Delta(\mu_i;N,\delta)=\sum_{\bx\in\Delta(\mu_i;N,\delta)\cap\cX_\le^N}
{N!\over\prod_{l=1}^dN_\bx(t_l)!},
$$
we have as inequalities in \eqref{F-4.7}
\begin{align*}
&\#\bigl(\Delta(\mu_i;N,\delta)\cap\cX_\le^N\bigr)
\cdot\exp\biggl[N\biggl(S(\mu_i)-{\eps\over n}+o(1)\biggr)\biggr] \\
&\quad\le\#\Delta(\mu_i;N,\delta)
\le\#\bigl(\Delta(\mu_i;N,\delta)\cap\cX_\le^N\bigr)
\cdot\exp\biggl[N\biggl(S(\mu_i)+{\eps\over n}+o(1)\biggr)\biggr]
\quad\mbox{as $N\to\infty$}.
\end{align*}
This and \eqref{F-4.1} imply that
\begin{align}
-{\eps\over n}&\le\liminf_{N\to\infty}{1\over N}\log
\#\bigl(\Delta(\mu_i;N,\delta)\cap\cX_\le^N\bigr) \nonumber\\
&\le\limsup_{N\to\infty}{1\over N}\log
\#\bigl(\Delta(\mu_i;N,\delta)\cap\cX_\le^N\bigr)\le{\eps\over n}. \label{F-4.10}
\end{align}
It follows from \eqref{F-4.8} that
\begin{align*}
P(h)&={1\over N}\log Z_h^N \\
&\ge-2\eps+{1\over N}\log\Biggl[{1\over(N!)^n}\sum_{\sigma_1,\dots,\sigma_n\in S_N}
\exp\bigl(N\kappa_N(h(\sigma_1(\xi_1(N)),\dots,\sigma_n(\xi_n(N))))\bigr)\Biggr] \\
&\qquad+\sum_{i=1}^n{1\over N}\log\#\bigl(\Delta(\mu_i;N,\delta)\cap\cX_\le^N\bigr)
+\sum_{i=1}^nS(\mu_i)+o(1)\quad\mbox{as $N\to\infty$},
\end{align*}
which implies that
$$
P(h)\ge-3\eps+P_\sym(h:\mu_1,\dots,\mu_n)+\sum_{i=1}^nS(\mu_i)
$$
thanks to \eqref{F-4.10}. Hence inequality \eqref{F-4.2} follows since $\eps>0$ is
arbitrary.

To prove the equivalence of (i)--(iii), let $\mu_{h,i}$ be the $i$th marginal of $\mu_h$.
Then it follows that $\mu_{h,i}(x)=Z_h^{-1}e^{h_i(x)}$ and so $\mu_{h,i}$ is the Gibbs
measure associated with $h_i$ for $1\le i\le n$. Hence (ii) $\Leftrightarrow$ (iii)
follows. Assume (ii), i.e., that $\mu_i=\mu_{h,i}$ for all $i=1,\dots,n$. Since we have
$\lim_{N\to\infty}\mu_i^{\otimes N}(\Delta(\mu_i;N,\delta))=1$ based on the Sanov theorem
as in Lemma \ref{L-3.1}\,(a), it follows that
$$
\lim_{N\to\infty}\mu_h^{\otimes N}\Biggl(\prod_{i=1}^n\Delta(\mu_i;N,\delta)\Biggr)=1
$$
as in the proof of (ii) $\Rightarrow$ (i) of Theorem \ref{T-3.2}. Combining this with
\eqref{F-4.9} and \eqref{F-4.10} yields
$$
P(h)\le3\eps+P_\sym(h:\mu_1,\dots,\mu_n)+\sum_{i=1}^nS(\mu_i),
$$
which implies equality in (i). Conversely, assume (i). Then \eqref{F-4.8} and
\eqref{F-4.10} imply that
$$
\limsup_{N\to\infty}{1\over N}\log\mu_h^{\otimes N}
\Biggl(\prod_{i=1}^n\Delta(\mu_i;N,\delta)\Biggr)\ge-3\eps.
$$
The same reasoning as in the last part of the proof of Theorem \ref{T-3.2} gives
$$
\limsup_{N\to\infty}{1\over N}\log\mu_{h,i}^{\otimes N}(\Delta(\mu_i;N,\delta))=0
$$
for all $\delta>0$ and $i=1,\dots,n$. Since we have a result similar to Lemma
\ref{L-3.1}\,(b) in the present discrete situation, it follows that $\mu_i=\mu_{h,i}$ for
all $i=1,\dots,n$, and so (ii) holds.
\end{proof}

The next theorem and proposition are the discrete versions of Theorem \ref{T-3.5} and
Proposition \ref{P-3.6}. Since their proofs based on Theorems \ref{T-4.2} and \ref{T-4.4}
are similar to those in Section 3, we omit the details. Here note only that
$\cI_\sym(\mu)\le I_\sym(\mu)$ for every $\mu\in\Prob(\cX^n)$ can be shown similarly to
the proof of Lemma \ref{L-3.4} or by the same reasoning as given after Definition
\ref{D-4.3}, and that the Legendre transform expression as in \eqref{F-1.1}
$$
S(\mu)=\inf\{-\mu(h)+P(h):h\in C_\bR(\cX^n)\}
$$
is valid for every $\mu\in\Prob(\cX^n)$.

\begin{thm}\label{T-4.5}
For every $\mu\in\Prob(\cX^n)$ with marginals $\mu_1,\dots,\mu_n\in\Prob(\cX)$,
$$
\cI_\sym(\mu)=I_\sym(\mu)=-S(\mu)+\sum_{i=1}^nS(\mu_i).
$$
\end{thm}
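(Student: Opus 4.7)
The plan is to mirror the proof of Theorem \ref{T-3.5}, since all the ingredients needed in the continuous case have explicit discrete analogs: Theorem \ref{T-4.4} plays the role of Theorem \ref{T-3.2}, Theorem \ref{T-4.2} plays the role of Theorem \ref{T-1.2}, the Legendre transform formula $S(\mu)=\inf\{-\mu(h)+P(h):h\in C_\bR(\cX^n)\}$ plays the role of \eqref{F-1.1}, and the inequality $\cI_\sym(\mu)\le I_\sym(\mu)$ (noted in the remarks preceding the theorem) plays the role of Lemma \ref{L-3.4}. The strategy is to sandwich $-\cI_\sym(\mu)+\sum_i S(\mu_i)$ between $S(\mu)$ above and $-I_\sym(\mu)+\sum_i S(\mu_i)=S(\mu)$ below, forcing both inequalities to be equalities.

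First I would apply Theorem \ref{T-4.4}: for every $h\in C_\bR(\cX^n)$,
$$
-\mu(h)+P(h)\ge-\mu(h)+P_\sym(h:\mu_1,\dots,\mu_n)+\sum_{i=1}^nS(\mu_i)
\ge-\cI_\sym(\mu)+\sum_{i=1}^nS(\mu_i),
$$
where the second inequality uses the definition of $\cI_\sym(\mu)$ as the Legendre transform of $P_\sym$. Taking the infimum over $h\in C_\bR(\cX^n)$ and invoking the Legendre transform expression for $S(\mu)$ yields $S(\mu)\ge-\cI_\sym(\mu)+\sum_{i=1}^nS(\mu_i)$.

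Next I would chain this with the announced inequality $\cI_\sym(\mu)\le I_\sym(\mu)$ and with Theorem \ref{T-4.2}:
$$
S(\mu)\ge-\cI_\sym(\mu)+\sum_{i=1}^nS(\mu_i)\ge-I_\sym(\mu)+\sum_{i=1}^nS(\mu_i)=S(\mu).
$$
Since the two ends coincide, both intermediate inequalities must be equalities, giving $\cI_\sym(\mu)=I_\sym(\mu)=-S(\mu)+\sum_{i=1}^nS(\mu_i)$, as required.

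The only step that is not a one-line consequence of cited results is the preliminary inequality $\cI_\sym(\mu)\le I_\sym(\mu)$, and this is the main (though still minor) obstacle. I would prove it by the same scheme as Lemma \ref{L-3.4}: fix an approximating sequence $\Xi(N)$ and a function $h\in C_\bR(\cX^n)$; for any $\eps>0$ choose $\delta>0$ so that for every $(\sigma_1,\dots,\sigma_n)\in\Delta_\sym(\mu:\Xi(N);N,\delta)$ one has $\kappa_N(h(\sigma_1(\xi_1(N)),\dots,\sigma_n(\xi_n(N))))>\mu(h)-\eps$ for all large $N$, and then estimate
$$
e^{N(\mu(h)-\eps)}\,\frac{\#\Delta_\sym(\mu:\Xi(N);N,\delta)}{(N!)^n}
\le\frac{1}{(N!)^n}\sum_{\sigma_1,\dots,\sigma_n\in S_N}
\exp\bigl(N\kappa_N(h(\sigma_1(\xi_1(N)),\dots,\sigma_n(\xi_n(N))))\bigr).
$$
Taking $\frac{1}{N}\log$, letting $N\to\infty$, then $\delta\searrow0$, and finally $\eps\searrow0$ gives $\mu(h)-I_\sym(\mu)\le P_\sym(h:\mu_1,\dots,\mu_n)$. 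Taking the supremum over $h\in C_\bR(\cX^n)$ yields $\cI_\sym(\mu)\le I_\sym(\mu)$. (One could equivalently invoke the reduction to the continuous case described before Theorem \ref{T-4.4}, applying Lemma \ref{L-3.4} to $\hat\mu$ and noting that $\cI_\sym(\mu)=\cI_\sym(\hat\mu)$ and $I_\sym(\mu)=I_\sym(\hat\mu)$.)
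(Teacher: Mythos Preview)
Your proposal is correct and follows essentially the same approach the paper indicates: mirror the proof of Theorem~\ref{T-3.5} using Theorem~\ref{T-4.4} in place of Theorem~\ref{T-3.2}, Theorem~\ref{T-4.2} in place of Theorem~\ref{T-1.2}, the discrete Legendre formula for $S(\mu)$ in place of \eqref{F-1.1}, and the discrete analog of Lemma~\ref{L-3.4}. Your supplementary proof of $\cI_\sym(\mu)\le I_\sym(\mu)$ and the alternative via the embedding $\mu\mapsto\hat\mu$ are exactly the two routes the paper mentions.
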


\begin{prop}\label{P-4.6}
Let $h\in C_\bR(\cX^n)$ and $\mu\in\Prob(\cX^n)$. Let $\mu_1,\dots,\mu_n$ be the marginals
of $\mu$ and $h_1,\dots,h_n$ be as given in (iii) of Theorem \ref{T-4.4}. Then the
following are equivalent:
\begin{itemize}
\item[(i)] $\mu$ is Gibbs measure associated with $h$;
\item[(ii)] $\mu$ is mutually equilibrium associated with $h$ and $\mu_i$ is the Gibbs
measure associated with $h_i$ for each $i=1,\dots,n$.
\end{itemize}
\end{prop}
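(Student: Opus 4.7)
The plan is to mirror the proof of Proposition \ref{P-3.6}, using Theorem \ref{T-4.4} in place of Theorem \ref{T-3.2} and Theorem \ref{T-4.5} in place of Theorem \ref{T-3.5}. The key observation is the chain of inequalities
$$
-\mu(h)+P(h)\ge-\mu(h)+P_\sym(h:\mu_1,\dots,\mu_n)+\sum_{i=1}^nS(\mu_i)
\ge-\cI_\sym(\mu)+\sum_{i=1}^nS(\mu_i)=S(\mu),
$$
where the first inequality is \eqref{F-4.2}, the second follows from the definition of $\cI_\sym(\mu)$ as a supremum, and the final equality is Theorem \ref{T-4.5}. Combined with the discrete version of \eqref{F-1.1} (noted just before Theorem \ref{T-4.5}), which says $S(\mu)=-\mu(h)+P(h)$ if and only if $\mu=\mu_h$, this reduces the proposition to the question of when each inequality above becomes an equality.

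For (i) $\Rightarrow$ (ii), if $\mu=\mu_h$ then the variational equality gives $S(\mu)=-\mu(h)+P(h)$, forcing both inequalities to be equalities. The first equality is exactly condition (i) of Theorem \ref{T-4.4}, so by (i) $\Leftrightarrow$ (iii) of that theorem each $\mu_i$ is the Gibbs measure associated with $h_i$. The second equality is precisely the mutually equilibrium condition $\cI_\sym(\mu)=\mu(h)-P_\sym(h:\mu_1,\dots,\mu_n)$. Conversely, for (ii) $\Rightarrow$ (i), assume $\mu$ is mutually equilibrium and each $\mu_i$ is the Gibbs measure associated with $h_i$. Then Theorem \ref{T-4.4} (iii) $\Rightarrow$ (i) yields $P(h)=P_\sym(h:\mu_1,\dots,\mu_n)+\sum_{i=1}^nS(\mu_i)$, and the mutually equilibrium hypothesis gives $\mu(h)-P_\sym(h:\mu_1,\dots,\mu_n)=\cI_\sym(\mu)$. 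Substituting both into Theorem \ref{T-4.5} collapses the chain above to $S(\mu)=-\mu(h)+P(h)$, and the variational characterization then forces $\mu=\mu_h$.

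There is no substantial obstacle: the argument is a routine diagram-chasing between Theorems \ref{T-4.4} and \ref{T-4.5}, and the only thing one must verify is that in the discrete setting the variational formula $S(\mu)=\inf\{-\mu(h)+P(h):h\in C_\bR(\cX^n)\}$ is attained uniquely at $\mu=\mu_h$, which is standard (and is precisely what is invoked in the parallel proof of Proposition \ref{P-3.6}). Thus the proof amounts to observing that the two characterizations in (ii) correspond exactly to equality in the two successive inequalities derived from Theorem \ref{T-4.4} and the definition of $\cI_\sym$.
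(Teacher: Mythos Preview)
Your proposal is correct and takes essentially the same approach as the paper, which omits the proof and simply says it parallels that of Proposition \ref{P-3.6}. The only cosmetic difference is that in (i) $\Rightarrow$ (ii) the paper's argument for Proposition \ref{P-3.6} obtains that each $\mu_i$ is the Gibbs measure for $h_i$ directly from $\mu=\mu_h$ (via the computation \eqref{F-3.5}) rather than via the equivalence (i) $\Leftrightarrow$ (iii) of Theorem \ref{T-4.4}, but both routes are valid and yield the same conclusion.
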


\end{document}